\newtheorem{theorem}{Theorem}[section]
\newtheorem{proposition}[theorem]{Proposition}
\newtheorem{definition}[theorem]{Definition}
\newtheorem{remark}[theorem]{Remark}
\newtheorem{assumption}[theorem]{Assumption}
\def\section{\@startsection {section}{1}{\z@}{3.25ex plus 1ex minus
		.2ex}{1.5ex plus .2ex}{\large\bf}}
\def\subsection{\@startsection{subsection}{2}{\z@}{3.25ex plus 1ex minus
		.2ex}{1.5ex plus .2ex}{\normalsize\bf}}
\title{Wong-Zakai approximations for quasilinear systems of It\^o's type stochastic differential equations}
\author{Alberto Lanconelli\thanks{Dipartimento di Scienze Statistiche Paolo Fortunati, Università di Bologna, Bologna, Italy. \textbf{e-mail}: alberto.lanconelli2@unibo.it} \and  Ramiro Scorolli\thanks{Dipartimento di Scienze Statistiche Paolo Fortunati, Università di Bologna, Bologna, Italy. \textbf{e-mail}: ramiro.scorolli2@unibo.it}}
\date{\today}
\begin{document}
	
\maketitle
	
\bigskip	

\begin{abstract}
	We extend to the multidimensional case a Wong-Zakai-type theorem proved by Hu and {\O}ksendal in \cite{HO} for scalar quasi-linear It\^o stochastic differential equations (SDEs). More precisely, with the aim of approximating the solution of a quasilinear system of It\^o's SDEs, we consider for any finite partition of the time interval $[0,T]$ a system of differential equations, where the multidimensional Brownian motion is replaced by its polygonal approximation and the product between diffusion coefficients and smoothed white noise is interpreted as a Wick product. We remark that in the one dimensional case this type of equations can be reduced, by means of a transformation related to the method of characteristics, to the study of a random ordinary differential equation. Here, instead, one is naturally lead to the investigation of a semilinear hyperbolic system of partial differential equations that we utilize for constructing a solution of the Wong-Zakai approximated systems. We show that the law of each element of the approximating sequence solves in the sense of distribution a Fokker-Planck equation and that the sequence converges to the solution of the It\^o equation, as the mesh of the partition tends to zero.      
\end{abstract}

Key words and phrases: Stochastic differential equations, Wong-Zakai approximation, Wick product, Fokker-Planck equation. \\

AMS 2000 classification: 60H10; 60H30; 60H05.

\bigskip

\allowdisplaybreaks

\section{Introduction and statement of the main results}

Let $\{B(t)\}_{t\in [0,T]}$ be a standard one dimensional Brownian motion and, for a given finite partition $\pi$ of the interval $[0,T]$, denote by $\{{B}^{\pi}(t)\}_{t\in [0,T]}$ its polygonal approximation. Then, under suitable conditions on the coefficients $b:[0,T]\times\mathbb{R}\to\mathbb{R}$ and $\sigma:[0,T]\times\mathbb{R}\to\mathbb{R}$, the solution $\{Y^{\pi}(t)\}_{t\in [0,T]}$ of the random ordinary differential equation
\begin{align}\label{stra approx intro}
\frac{dY^{\pi}(t)}{dt}=b(t,Y^{\pi}(t))+\sigma(t,Y^{\pi}(t))\cdot\frac{d{B}^{\pi}(t)}{dt},
\end{align}
converges, as the mesh of $\pi$ tends to zero, to the strong solution $\{Y(t)\}_{t\in [0,T]}$ of the Stratonovich stochastic differential equation (SDE, for short)
\begin{align}\label{stra}
	dY(t)=b(t,Y(t))dt+\sigma(t,Y(t))\circ dB(t),
\end{align}
or equivalently (see \cite{KS}) of the It\^o SDE
\begin{align*}
dY(t)=\left[b(t,Y(t))+\frac{1}{2}\sigma(t,Y(t))\partial_y\sigma(t,Y(t))\right]dt+\sigma(t,Y(t))dB(t).
\end{align*}
This is the famous Wong-Zakai theorem \cite{WZ},\cite{WZ2} whose extension to the multidimensional case can be found in \cite{Stroock Varadhan}.\\     
In \cite{HO} the authors suggested how to modify equation (\ref{stra approx intro}) to get in the limit the It\^o's interpretation of (\ref{stra}): they considered the case with $\sigma(t,x)=\sigma(t)x$, where $\sigma:[0,T]\to\mathbb{R}$ is a deterministic function, and proved that the solution $\{X^{\pi}(t)\}_{t\in [0,T]}$ of the differential equation  
\begin{align}\label{ito approx intro}
\frac{dX^{\pi}(t)}{dt}=b(t,X^{\pi}(t))+\sigma(t)X^{\pi}(t)\diamond\frac{d{B}^{\pi}(t)}{dt},
\end{align}
converges, as the mesh of $\pi$ tends to zero, to the strong solution $\{X(t)\}_{t\in [0,T]}$ of the It\^o SDE
\begin{align}\label{ito 1D}
dX(t)=b(t,X(t))dt+\sigma(t)X(t)dB(t).
\end{align}
Here, the symbol $X^{\pi}(t)\diamond\frac{d{B}^{\pi}(t)}{dt}$ stands for the \emph{Wick product} between $X^{\pi}(t)$ and $\frac{d{B}^{\pi}(t)}{dt}$. (We postpone to the next section all the necessary mathematical details for the tools utilized here). Observe that the achievement of \cite{HO} is twofold: existence of a solution for (\ref{ito approx intro}) and its convergence towards the solution of (\ref{ito 1D}) (see also the related works \cite{BL} and \cite{DLS}). As far as the existence is concerned, equation (\ref{ito approx intro}) is not a standard random ordinary differential equation but instead an infinite dimensional partial differential equation. In fact, via equality
\begin{align}\label{Wick pointwise}
X^{\pi}(t)\diamond\frac{d{B}^{\pi}(t)}{dt}=X^{\pi}(t)\frac{d{B}^{\pi}(t)}{dt}-D_{\partial_tK^{\pi}(t,\cdot)}X^{\pi}(t),
\end{align}
where $K^{\pi}(t,\cdot)$ is a deterministic function that verifies the identity
\begin{align*}
B^{\pi}(t)=\int_0^TK^{\pi}(t,s)dB(s),
\end{align*}
while $D_{\partial_tK^{\pi}(t,\cdot)}$ stands for the directional Malliavin derivative along the function $s\mapsto \partial_tK^{\pi}(t,s)$, one recognizes equation (\ref{ito approx intro}) as a nonlinear evolution equation driven by an infinite dimensional gradient. Nevertheless, the particular form  of $\sigma(t,x)$ considered in \cite{HO} allows for a reduction method which transforms that into a random ordinary differential equation. We now briefly describe such method: we Wick-multiply both sides of (\ref{ito approx intro}) by
\begin{align*}
\mathtt{E}^{\pi}(0,t):=e^{-\int_0^t\sigma(s)\frac{d{B}^{\pi}(s)}{ds}ds-\frac{1}{2}\mathbb{E}\left[\left(\int_0^t\sigma(s)\frac{d{B}^{\pi}(s)}{ds}ds\right)^2\right]},\quad t\in [0,T],
\end{align*}  
to obtain
\begin{align*}
\frac{dX^{\pi}(t)}{dt}\diamond\mathtt{E}^{\pi}(0,t) =b(t,X^{\pi}(t))\diamond\mathtt{E}^{\pi}(0,t)+\sigma(t)X^{\pi}(t)\diamond\frac{d{B}^{\pi}(t)}{dt}\diamond\mathtt{E}^{\pi}(0,t),
\end{align*}
or equivalently,
\begin{align*}
\frac{dX^{\pi}(t)}{dt}\diamond\mathtt{E}^{\pi}(0,t) =b(t,X^{\pi}(t))\diamond\mathtt{E}^{\pi}(0,t)-X^{\pi}(t)\diamond\frac{d\mathtt{E}^{\pi}(0,t)}{dt}.
\end{align*}
Here, we utilized the identity
\begin{align*}
\frac{d\mathtt{E}^{\pi}(0,t)}{dt}=\sigma(t)\frac{d{B}^{\pi}(t)}{dt}\diamond\mathtt{E}^{\pi}(0,t).
\end{align*}
Rearranging the terms and exploiting the Leibniz rule for the Wick product we can write
\begin{align}\label{zx}
\frac{d}{dt}\left(X^{\pi}(t)\diamond\mathtt{E}^{\pi}(0,t)\right) =b(t,X^{\pi}(t))\diamond\mathtt{E}^{\pi}(0,t).
\end{align}
Now, if we set
\begin{align*}
\mathcal{X}^{\pi}(t):=X^{\pi}(t)\diamond\mathtt{E}^{\pi}(0,t),\quad t\in [0,T],
\end{align*}
and recall that
\begin{align*}
\mathtt{E}^{\pi}(0,t)\diamond\mathcal{E}^{\pi}(0,t)=1,\quad\mbox{ for all $t\in [0,T]$},
\end{align*}
where
\begin{align*}
\mathcal{E}^{\pi}(0,t):=e^{\int_0^t\sigma(s)\frac{d{B}^{\pi}(s)}{ds}ds-\frac{1}{2}\mathbb{E}\left[\left(\int_0^t\sigma(s)\frac{d{B}^{\pi}(s)}{ds}ds\right)^2\right]},\quad t\in [0,T],
\end{align*}  
we can reduce (\ref{zx}) to
\begin{align}\label{zx1}
\frac{d\mathcal{X}^{\pi}(t)}{dt} =b(t,\mathcal{X}^{\pi}(t)\diamond\mathcal{E}^{\pi}(0,t))\diamond\mathtt{E}^{\pi}(0,t).
\end{align}
Equation (\ref{zx1}) doesn't look simpler than (\ref{ito approx intro}); however, in (\ref{zx1}) one can apply the so-called Gjessing's Lemma which produces a Wick product-free expression. First, we observe that resorting to the definition of $\{{B}^{\pi}(t)\}_{t\in [0,T]}$ (see equation (\ref{polygonal}) below) one gets the representation
\begin{align*}
\int_0^t\sigma(s)\frac{d{B}^{\pi}(s)}{ds}ds=\int_0^T\sigma^{\pi}(t,s)dB(s),
\end{align*}
for a suitable $\sigma^{\pi}:[0,T]\times [0,T]\to \mathbb{R}$. With this notation at hand, Gjessing's formula can be simply stated as
\begin{align}\label{Gjessing 1}
\mathcal{Z}\diamond\mathcal{E}^{\pi}(0,t)=\mathtt{T}_{-\sigma^{\pi}(t,\cdot)}\mathcal{Z}\cdot\mathcal{E}^{\pi}(0,t),
\end{align}  
and
\begin{align}\label{Gjessing 2}
\mathcal{Z}\diamond\mathtt{E}^{\pi}(0,t)=\mathtt{T}_{\sigma^{\pi}(t,\cdot)}\mathcal{Z}\cdot\mathtt{E}^{\pi}(0,t),
\end{align}  
for a general random variable $\mathcal{Z}$ belonging to $\mathbb{L}^p(\Omega)$, for some $p> 1$. Here, $\mathtt{T}_{f}$ denotes the operator that translates the Brownian path by the function $\int_0^{\cdot}f(s)ds$. An application to equation (\ref{zx1}) of the last two identities leads to the random ordinary differential equation 
\begin{align}\label{zx2}
\frac{d\mathcal{X}^{\pi}(t)}{dt} =b\left(t,\mathcal{X}^{\pi}(t)\cdot\left(\mathtt{E}^{\pi}(0,t)\right)^{-1}\right)\cdot\mathtt{E}^{\pi}(0,t);
\end{align}
standard assumptions on the coefficients ensure  the existence of a unique solution $\{\mathcal{X}^{\pi}(t)\}_{t\in [0,T]}$ which, together with equality $X^{\pi}(t)=\mathcal{X}^{\pi}(t)\diamond\mathcal{E}^{\pi}(0,t)$, provides a unique solution also for (\ref{ito approx intro}). It is important to remark that the success of this reduction method is due to the opposite signs appearing in front of $\sigma^{\pi}(t,\cdot)$ in equations (\ref{Gjessing 1}) and (\ref{Gjessing 2}); this results in the disappearance of the translation operator, and hence of the Wick product, from equation (\ref{zx1}).

Aim of the present paper is the extension to the multidimensional case of the existence theorem for (\ref{ito approx intro}) and its convergence to (\ref{ito 1D}) proven in \cite{HO}. More precisely, for each finite partition $\pi$ of the interval $[0,T]$ we introduce the Cauchy problem
\begin{align}\label{WZ}
\begin{cases}
\frac{dX_i^{\pi}(t)}{dt}=b_i(t,X^{\pi}(t))+\sigma_i(t)X^{\pi}_i(t)\diamond \frac{dB_i^{\pi}(t)}{dt},\\
\quad\quad\mbox{for }t\in]0,T]\mbox{ and } i=1,...,d;\\
X^{\pi}_i(0)=c_i\in\mathbb{R},\quad\mbox{for } i=1,...,d,
\end{cases}
\end{align}
where $\{B^{\pi}(t)=(B^{\pi}_1(t),...,B^{\pi}_d(t))^*\}_{t\in[0,T]}$ stands for the polygonal approximation, relative to the partition $\pi$, of the standard $d$-dimensional Brownian motion $\{B(t)=(B_1(t),...,B_d(t))^*\}_{t\in [0,T]}$; the functions $b_1,...,b_d:[0,T]\times \mathbb{R}^d\to\mathbb{R}$ and $\sigma_1,...,\sigma_d:[0,T]\to\mathbb{R}$ are measurable while $c\in\mathbb{R}^d$ is a deterministic initial condition. System (\ref{WZ}) should be thought as a Wong-Zakai-type approximation for the system of It\^o's SDEs
\begin{align}\label{ItoSDE}
\begin{cases}
dX_i(t)=b_i(t,X(t))dt+\sigma_i(t)X_i(t)dB_i(t),\\
\quad\quad\mbox{for }t\in]0,T]\mbox{ and } i=1,...,d;\\
X_i(0)=c_i\in\mathbb{R},\quad\mbox{for } i=1,...,d.
\end{cases}
\end{align}
We will assume throughout the paper the following regularity properties for the coefficients: they guarantee the existence of a unique strong solution for (\ref{ItoSDE}).

\begin{assumption}\label{assumptions}\quad
	\begin{itemize}
		\item The functions $b(t,x)$, $\partial_{x_1}b(t,x)$,..., $\partial_{x_d}b(t,x)$ are bounded and continuous; 
		\item the functions $\sigma_1(t),...,\sigma_d(t)$ are bounded and continuous.
	\end{itemize}
\end{assumption}

Our first main theorem concerns the existence of a solution for (\ref{WZ}). It is worth mentioning that the reduction method described above doesn't apply to such systems, unless very particular cases are considered. In fact, the disappearance of the translation operator mentioned before takes place only when the same one dimensional Brownian motion drives all the equations in (\ref{WZ}) and moreover $\sigma_1(t)=\cdot\cdot\cdot=\sigma_d(t)$, for all $t\in [0,T]$. Therefore, to prove the existence of a solution for (\ref{WZ}) we have to employ a different approach which can be summarized as follows.\\
Using identity (\ref{Wick pointwise}) we rewrite (\ref{WZ}) as  
\begin{align}\label{WZ3}
\begin{cases}
\frac{dX_i^{\pi}(t)}{dt}=b_i(t,X^{\pi}(t))+\sigma_i(t)X^{\pi}_i(t)\frac{dB_i^{\pi}(t)}{dt}-\sigma_i(t)D^{(i)}_{K^{\pi}(t,\cdot)}X^{\pi}_i(t),\\
\quad\quad\mbox{for }t\in]0,T]\mbox{ and } i=1,...,d;\\
X^{\pi}_i(0)=c_i\in\mathbb{R},\quad\mbox{for } i=1,...,d.
\end{cases}
\end{align}
(Here, $D^{(i)}$ stands for the Mallivian derivative with respect to the $i$-th component of the multidimensional Brownian motion $\{B(t)\}_{t\geq 0}$). If we now divide the interval $[0,T]$ according to the partition $\pi=\{t_0,...,t_N\}$ and search on any subinterval $]t_k,t_{k+1}]$ for a solution to (\ref{WZ3}) of the form
\begin{align*}
X_i^{\pi}(t):=u_i(t,B^{\pi}(t_{k+1})-B^{\pi}(t_{k})),\quad t\in ]t_k,t_{k+1}], i=1,...,d
\end{align*} 
where $u_i:[0,T]\times\mathbb{R}^d\mapsto\mathbb{R}$ are deterministic functions, we see that $u=(u_1,...,u_d)$ has to solve a semilinear hyperbolic system of partial differential equations of the type
\begin{align}\label{PDE intro}
\begin{cases}
\partial_tu_i(t,x)=-\sigma_i(t)\partial_{x_i} u_i(t,x)+\sigma_i(t)\frac{x_i}{h}u_i(t,x)+b_i(t,u(t,x)),\\ 
\quad\quad\mbox{for }t\in]t_k,t_{k+1}], x\in\mathbb{R}^d\mbox{ and }i=1,...,d;\\
u_i(r,x)=\alpha_i,\quad\mbox{for } x\in\mathbb{R}^d\mbox{ and }i=1,...,d.
\end{cases}
\end{align}
Here, $h$ denotes the mesh of the partition $\pi$ while $\alpha_1,...,\alpha_d$ are suitable deterministic initial conditions. This link to the theory of partial differential equations allows us to state our first main result whose proof can be found in Section \ref{main theorem 1 proof}. We will deal with a weak notion of solution, see Definition \ref{def solution} below, that doesn't require any Malliavin differentiability property of the solution (as it should be implied by the last term in (\ref{WZ3})). 

\begin{theorem}[Existence]\label{main theorem 1}
	Under Assumption \ref{assumptions} equation (\ref{WZ}) possesses a mild solution $\{X^{\pi}(t)\}_{t\in [0,T]}$.
\end{theorem}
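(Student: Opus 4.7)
The plan is to follow the PDE strategy sketched in the introduction and work subinterval by subinterval. On each $]t_k, t_{k+1}]$ of the partition $\pi = \{t_0, \ldots, t_N\}$, I look for the solution in the ansatz form $X_i^\pi(t) = u_i(t, B^\pi(t_{k+1}) - B^\pi(t_k))$, where $u = (u_1, \ldots, u_d)$ is a deterministic vector field on $[t_k, t_{k+1}] \times \mathbb{R}^d$ with initial datum $u_i(t_k, x) \equiv \alpha_i := X_i^\pi(t_k)$ inherited parametrically from the endpoint of the previous step (treated as a fixed parameter, even though it is in general $\mathcal{F}_{t_k}$-measurable). Substituting this ansatz into the representation (\ref{WZ3}) and exploiting three elementary facts about the polygonal approximation --- namely $B^\pi(t_{k+1}) - B^\pi(t_k) = B(t_{k+1}) - B(t_k)$, the constancy of $\frac{dB_i^\pi(t)}{dt}$ equal to $(B_i(t_{k+1}) - B_i(t_k))/h$ on the interior of the subinterval, and the identity $\partial_t K^\pi(t, \cdot) = h^{-1} \mathbf{1}_{]t_k, t_{k+1}]}$ --- the chain rule collapses the directional Malliavin derivative onto $\partial_{x_i} u_i$, so that the system reduces exactly to the semilinear hyperbolic Cauchy problem (\ref{PDE intro}).

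The second step is to solve (\ref{PDE intro}) on $[t_k, t_{k+1}]$ by the method of characteristics combined with a Picard fixed-point argument. The $i$-th equation transports along straight lines parallel to the $x_i$-axis, so pulling back $u_i$ along the characteristic $s \mapsto (x_1, \ldots, x_i - \int_s^t \sigma_i(r)\,dr, \ldots, x_d)$ issuing from $(t, x)$ yields a linear scalar ODE in $s$ with integrating factor of exponential form in $\int_s^t \sigma_i(r) x_i(r)/h\,dr$ and Duhamel source $b_i(s, u(s, \cdot))$. This furnishes a closed integral equation $u = \mathcal{T}[u]$; by Assumption \ref{assumptions}, $b$ is bounded and Lipschitz in its spatial argument and each $\sigma_i$ is bounded, so $\mathcal{T}$ is a contraction on a space of continuous functions equipped with a suitable weight (for instance $e^{-\gamma|x|}$) that absorbs the linearly growing coefficient $\sigma_i(t) x_i / h$. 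A standard continuation argument extends the unique local fixed point to the full subinterval. Iterating this construction across the $N$ subintervals, with the random endpoint $u(t_{k+1}, B(t_{k+1}) - B(t_k))$ of one step serving as the parametric initial datum for the next, defines the candidate process $\{X^\pi(t)\}_{t \in [0, T]}$.

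The main obstacle I anticipate is the final verification that the process constructed above is genuinely a \emph{mild} solution of (\ref{WZ}) in the sense of Definition \ref{def solution}: one must translate the classical PDE identity (\ref{PDE intro}) back into the integrated form of the Wick-product equation via the representation (\ref{Wick pointwise}), without presupposing Malliavin differentiability of $X^\pi$ itself --- this is precisely why the weak notion of solution is invoked, and it should allow the chain of equivalences to be pushed through at the level of the deterministic function $u$, whose spatial $C^1$-smoothness is provided by the fixed-point construction. A closely related point is that the passage across partition nodes, where $\alpha$ is promoted from a deterministic parameter to an $\mathcal{F}_{t_k}$-measurable random variable, requires joint measurability and continuity of the map $\alpha \mapsto u(\,\cdot\,,\,\cdot\,;\alpha)$; this follows from a Lipschitz dependence on $\alpha$ embedded in the contraction estimates of the second step, and guarantees the correct adaptedness and continuity of the pasted process $X^\pi$.
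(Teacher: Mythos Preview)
Your proposal is correct and follows essentially the same route as the paper: both reduce the Wong--Zakai system on each subinterval to the semilinear hyperbolic PDE (\ref{PDE intro}) via the ansatz $X_i^\pi(t)=u_i(t,B(t_{k+1})-B(t_k))$, solve it, and then patch across nodes with the random endpoint as initial datum.

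The one technical difference worth noting is how the linearly growing coefficient $\sigma_i(t)x_i/h$ is handled. You propose a Picard fixed point in a weighted space $e^{-\gamma|x|}$; the paper instead uses the explicit Gaussian substitution $v(t,x):=u(t,x)e^{-|x|^2/(2h)}$, which kills the zeroth-order term and turns (\ref{PDE intro}) into a pure transport system with bounded nonlinearity. This has two advantages: existence becomes a direct citation of standard hyperbolic theory, and, more importantly, the resulting Duhamel formula for $u$ (the paper's identity (\ref{PDE to SDE})) already contains exactly the exponential factors $e^{\frac{x_i}{h}\Sigma_i(s,t)-\frac{1}{2h}\Sigma_i(s,t)^2}$ and the shifted argument $x-\Sigma_i(s,t)e_i$. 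Evaluated at $x=B(t_{k+1})-B(t_k)$, these are precisely $\mathcal{E}_i^\pi(s,t)$ and the action of the translation operator $\mathtt{T}_{-\sigma_i,k}$, so the verification of the mild-solution identity (\ref{solution}) (via its localized equivalent on each subinterval) becomes a line-by-line read-off using the Gjessing formula (\ref{def wick exponential}), with no need to invoke (\ref{Wick pointwise}) or any Malliavin differentiability of $X^\pi$. Your weighted-space argument would reach the same endpoint, but the Gaussian substitution makes the bridge between PDE and Wick product fully explicit and avoids the ``main obstacle'' you anticipate.
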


Our second main result shows that system (\ref{WZ}) is naturally connected with a Fokker-Planck-type equation which is solved in the sense of distributions by the law of the mild solution $\{X^{\pi}(t)\}_{t\in [0,T]}$. This establishes a further similarity between the Wong-Zakai approximating equation (\ref{WZ}) and its exact counterpart (\ref{ItoSDE}). This theorem generalizes the one obtained in \cite{Lanconelli FPWZ} for the scalar problem (\ref{ito approx intro}). The proof is postponed to Section \ref{main theorem 2 proof}.  

\begin{theorem}[Fokker-Planck equation]\label{main theorem 2}
	The law 
	\begin{align*}
	\mu^{\pi}(t,A):=\mathbb{P}(X^{\pi}(t)\in A),\quad t\in [0,T], A\in\mathcal{B}(\mathbb{R}^d)
	\end{align*}
	of the random vector $X^{\pi}(t)$ solves in the sense of distributions the Fokker-Planck equation 
	\begin{align}
		\left(\partial_t+\sum_{i,j=1}^d\sigma_i(t)x_ig_{ij}(t,x_i)\partial^2_{x_ix_j}+\sum_{i=1}^db_i(t,x)\partial_{x_i}\right)^*\mathtt{u}(t,x)=0,\quad t\in [0,T],x\in\mathbb{R}^d.
	\end{align}
Here, $g_{ij}:[0,T]\times\mathbb{R}\to\mathbb{R}$ is the measurable function defined in (\ref{g1}) and (\ref{g2}) below.	
\end{theorem}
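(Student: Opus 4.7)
The plan is to test the distributional Fokker-Planck equation against an arbitrary $\varphi \in C_c^\infty((0,T)\times\mathbb{R}^d)$. By the fundamental theorem of calculus applied to $t \mapsto \mathbb{E}[\varphi(t, X^\pi(t))]$ together with the compact support of $\varphi$ in time, the target identity reduces to showing the pointwise chain-rule-type equality, for almost every $t \in (0,T)$,
\begin{align*}
\mathbb{E}\Bigl[\textstyle\sum_{i=1}^d \partial_{x_i}\varphi(t, X^\pi(t))\, \frac{dX_i^\pi(t)}{dt}\Bigr] = \mathbb{E}\Bigl[\sum_i b_i(t, X^\pi(t))\partial_{x_i}\varphi + \sum_{i,j} \sigma_i(t)\, X_i^\pi(t)\, g_{ij}(t, X_i^\pi(t))\,\partial^2_{x_i x_j}\varphi\Bigr].
\end{align*}
The drift part matches trivially; the real content lies in showing that the Wick-product contribution to $\frac{dX_i^\pi}{dt}$ produces after expectation precisely the Hessian-type expression on the right-hand side.

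To this end I would localize on a subinterval $]t_k, t_{k+1}]$ and exploit the representation $X^\pi(t) = u(t, \Delta_k B)$ produced in the proof of Theorem \ref{main theorem 1}, with $\Delta_k B$ a centered Gaussian vector of covariance $hI$ independent of $\mathcal{F}_{t_k}$ and $u$ solving the hyperbolic system (\ref{PDE intro}). Conditioning on $\mathcal{F}_{t_k}$, I would write
\begin{align*}
\mathbb{E}[\varphi(t, X^\pi(t)) \mid \mathcal{F}_{t_k}] = \int_{\mathbb{R}^d} \varphi(t, u(t, \xi))\, \rho_h(\xi)\, d\xi,
\end{align*}
differentiate in $t$, substitute $\partial_t u_i = -\sigma_i\partial_{\xi_i} u_i + \sigma_i\frac{\xi_i}{h} u_i + b_i(t, u)$ from (\ref{PDE intro}), and apply Gaussian integration by parts against the identity $\frac{\xi_i}{h}\rho_h = -\partial_{\xi_i}\rho_h$ to transform $\partial_{x_i}\varphi\cdot\frac{\xi_i}{h}u_i$. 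The single-derivative terms arising from the transport part of the PDE and from the boundary-free Gaussian IBP cancel identically, leaving the genuine Hessian contribution
\begin{align*}
\sum_{i,j}\sigma_i(t)\int \partial^2_{x_i x_j}\varphi(t, u(t,\xi))\, u_i(t,\xi)\, \partial_{\xi_i}u_j(t,\xi)\, \rho_h(\xi)\, d\xi
\end{align*}
together with the drift term $\sum_i\int b_i(t,u)\partial_{x_i}\varphi\,\rho_h\,d\xi$.

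To close the argument I must identify $\mathbb{E}\bigl[\partial_{\xi_i}u_j(t,\Delta_k B) \mid X^\pi(t), \mathcal{F}_{t_k}\bigr]$ with the function $g_{ij}(t, X_i^\pi(t))$ introduced in (\ref{g1})--(\ref{g2}); this will be the \emph{definition} of $g_{ij}$, and the crucial structural claim that it depends only on the $i$-th component $x_i$ (rather than on the full vector $x$) should follow from the diagonal form of the transport part of (\ref{PDE intro}) together with the independence of the components of $\Delta_k B$: along characteristics $u_i$ couples to the other components only through $b_i(t,u)$, so that averaging out the $\xi_j$ for $j\neq i$ collapses the dependence to $\xi_i$, and hence to $X_i^\pi(t)$. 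This reduction, together with the Fubini-type justification for exchanging the conditional expectation with the Gaussian integral, constitutes the main obstacle. Once established, the pointwise identity holds on each $]t_k,t_{k+1}]$; taking the unconditional expectation, integrating in $t$, and telescoping across the partition yields the global distributional equation claimed in the theorem.
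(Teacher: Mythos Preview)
Your proposal follows essentially the same route as the paper: localize on each $]t_{k-1},t_k]$, use the representation $X^\pi(t)=u(t,Z(k))$ with $Z(k)=B(t_k)-B(t_{k-1})$, apply the chain rule, substitute the PDE (\ref{PDE intro}) for $\partial_t u_i$, condition on $\mathcal{F}_{t_{k-1}}$, and exploit the Gaussian identity $\frac{\xi_i}{h}\rho_h=-\partial_{\xi_i}\rho_h$ to integrate by parts, so that the transport term $-\sigma_i\partial_{\xi_i}u_i$ cancels against one of the two pieces produced and the surviving piece is the Hessian contribution $\sum_{j}\sigma_i\,\partial^2_{x_ix_j}\varphi\cdot u_i\,\partial_{\xi_i}u_j$. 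The only divergence is in how $g_{ij}$ is obtained. You condition on $(X^\pi(t),\mathcal{F}_{t_k})$ and then seek a structural argument that the resulting Doob factor depends on $X_i^\pi(t)$ alone; the paper instead conditions directly on $\mathcal{G}^{(k)}_{i,t}:=\sigma\bigl(u_i(t,Z(k))\bigr)=\sigma(X_i^\pi(t))$ and invokes Doob's lemma, so that the $x_i$-only dependence is part of the \emph{definition} of $g_{ij}$ rather than a property to be proved. In other words, the paper circumvents what you flag as the ``main obstacle'' by choosing the conditioning sigma-algebra accordingly; no diagonal-transport or independence argument is used.
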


Lastly, we present the convergence of $\{X^{\pi}(t)\}_{t\in [0,T]}$ towards the solution of the It\^o equation (\ref{ItoSDE}), as the mesh $\Vert\pi\Vert$ of the partition $\pi$ tends to zero. For the proof the reader is referred to Section \ref{main theorem 3 proof}.

\begin{theorem}[Convergence]\label{main theorem 3}
The mild solution $\{X^{\pi}(t)\}_{t\in [0,T]}$ converges, as the mesh of $\pi$ tends to zero, to the unique strong solution $\{X(t)\}_{\in [0,T]}$ of the It\^o SDE (\ref{ItoSDE}). More precisely,
\begin{align*}
\lim_{\Vert\pi\Vert\to 0}\sum_{i=1}^d\mathbb{E}\left[\left|X_i^{\pi}(s)-X_i(s)\right|\right]=0,\quad\mbox{ for all }t\in [0,T].
\end{align*}
\end{theorem}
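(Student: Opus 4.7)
I would set up a direct Gr\"onwall-type comparison between the two equations. Integrating (\ref{WZ}) coordinate-wise gives
\begin{align*}
X_i^{\pi}(t)=c_i+\int_0^t b_i(s,X^{\pi}(s))\,ds+\int_0^t\sigma_i(s)X_i^{\pi}(s)\diamond\tfrac{dB_i^{\pi}(s)}{ds}\,ds,
\end{align*}
while the unique strong solution of (\ref{ItoSDE}) satisfies the analogous equation with $\int_0^t\sigma_i(s)X_i(s)\,dB_i(s)$ in place of the Wick-type last term. Subtracting, taking absolute values and expectations, and summing in $i$, the drift contribution is controlled by $C\int_0^t\sum_{j}\mathbb{E}|X_j^{\pi}(s)-X_j(s)|\,ds$ thanks to the uniform Lipschitz bound granted by Assumption \ref{assumptions}. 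The whole argument then reduces to showing that the noise discrepancy
\begin{align*}
\mathcal{R}_i^{\pi}(t):=\int_0^t\sigma_i(s)X_i^{\pi}(s)\diamond\tfrac{dB_i^{\pi}(s)}{ds}\,ds-\int_0^t\sigma_i(s)X_i(s)\,dB_i(s)
\end{align*}
is small in $\mathbb{L}^1(\Omega)$ modulo a term of the form $C\int_0^t\mathbb{E}|X^{\pi}(s)-X(s)|\,ds$.

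For this I would exploit the dictionary between Wick products with smoothed Brownian noise and Skorohod integration: the Wick integral coincides with the Skorohod integral of the kernel $r\mapsto\int_0^t\sigma_i(s)X_i^{\pi}(s)\partial_sK^{\pi}(s,r)\,ds$ against $\delta B_i$. The representation $X_i^{\pi}(s)=u_i(s,B^{\pi}(t_{k+1})-B^{\pi}(t_k))$ on $]t_k,t_{k+1}]$ obtained in the construction of the mild solution for Theorem \ref{main theorem 1} shows that the integrand is $\mathcal{F}_{t_{k+1}}^B$-measurable, hence adapted up to a one-step look-ahead of size $\Vert\pi\Vert$. On the adapted part the Skorohod integral equals the It\^o integral, and the residual Skorohod trace is of order $\Vert\pi\Vert^{1/2}$ in $\mathbb{L}^2$. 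Combined with uniform-in-$\pi$ moment bounds on $X^{\pi}$, derivable from the explicit PDE construction (\ref{PDE intro}) and the Gaussian tails of the polygonal increments $B^{\pi}(t_{k+1})-B^{\pi}(t_k)$, this lets one replace $X_i^{\pi}$ by $X_i$ inside the stochastic integral at the cost of the desired linear-in-difference error.

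Collecting these bounds produces an estimate of the form
\begin{align*}
\sum_{i=1}^d\mathbb{E}|X_i^{\pi}(t)-X_i(t)|\leq\varepsilon(\pi)+C\int_0^t\sum_{i=1}^d\mathbb{E}|X_i^{\pi}(s)-X_i(s)|\,ds,
\end{align*}
with $\varepsilon(\pi)\to 0$ as $\Vert\pi\Vert\to 0$, and Gr\"onwall's inequality closes the proof. The main obstacle is the Wick-to-It\^o passage: although identity (\ref{Wick pointwise}) formally decomposes the Wick product into an ordinary product minus a Malliavin derivative correction, $X^{\pi}$ is not a priori Malliavin differentiable in a strong sense, so the passage must be justified at the level of Skorohod integrals using the $S$-transform or Wiener chaos expansions, with quantitative control of the kernel $\partial_sK^{\pi}(s,r)$ and of the modulus of continuity in $x$ of the solution $u$ to the hyperbolic system (\ref{PDE intro}). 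The absence of Gjessing's reduction in the multidimensional vector case precludes any explicit exponential formula like the one available in \cite{HO}, and this residual-trace estimate is where the bulk of the technical work lies.
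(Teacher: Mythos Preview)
Your plan bypasses the very tool the paper builds for this purpose: the mild formulation (\ref{solution}). The paper's proof does \emph{not} compare the two equations in differential (or Skorohod-integrated) form. Instead it first shows, via It\^o's formula and Gjessing's lemma, that the It\^o solution itself satisfies the analogous mild identity
\[
X_i(t)=c_i\,\mathcal{E}_i(0,t)+\int_0^t b_i(s,X(s))\diamond\mathcal{E}_i(s,t)\,ds,
\]
with the exact stochastic exponential $\mathcal{E}_i(s,t)=\exp\bigl(\int_s^t\sigma_i\,dB_i-\tfrac12\int_s^t\sigma_i^2\bigr)$ in place of $\mathcal{E}_i^{\pi}(s,t)$. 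Subtracting the two mild identities, the noise sits entirely in the exponentials $\mathcal{E}_i^{\pi}$ and $\mathcal{E}_i$, and the difference $X^{\pi}-X$ appears only through $b_i(s,X^{\pi}(s))-b_i(s,X(s))$. Two elementary facts then close the Gr\"onwall loop in $\mathbb{L}^1$: the monotonicity (\ref{wick absolute value}), which gives $|Y\diamond\mathcal{E}_i^{\pi}(s,t)|\le |Y|\diamond\mathcal{E}_i^{\pi}(s,t)$, and the identity $\mathbb{E}\bigl[|Y|\diamond\mathcal{E}_i^{\pi}(s,t)\bigr]=\mathbb{E}[|Y|]$, which follows from Gjessing plus Cameron--Martin. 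The remaining error term involves only $\mathcal{E}_i^{\pi}-\mathcal{E}_i$ and the bounded $b_i$, so no Malliavin differentiability of $X^{\pi}$ is ever invoked.

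Your route, by contrast, has a structural gap at the step ``replace $X_i^{\pi}$ by $X_i$ inside the stochastic integral at the cost of the desired linear-in-difference error.'' There is no inequality of the form
\[
\mathbb{E}\Bigl|\int_0^t \sigma_i(s)\bigl(X_i^{\pi}(s)-X_i(s)\bigr)\,\delta B_i(s)\Bigr|\le C\int_0^t\mathbb{E}\bigl|X_i^{\pi}(s)-X_i(s)\bigr|\,ds
\]
for Skorohod (or It\^o) integrals; the $\mathbb{L}^1$-norm of a stochastic integral is controlled by the $\mathbb{L}^2$-norm of the integrand (via the isometry or BDG), not linearly by its $\mathbb{L}^1$-norm. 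So even granting your Skorohod-trace estimate of order $\Vert\pi\Vert^{1/2}$, the comparison term does not feed back into an $\mathbb{L}^1$ Gr\"onwall inequality. You would be forced to work in $\mathbb{L}^2$, which in turn requires uniform-in-$\pi$ second-moment bounds on $X^{\pi}$ and a quantitative $\mathbb{L}^2$ bound on the anticipating correction---precisely the ``bulk of the technical work'' you flag as unresolved. The paper's mild-form comparison sidesteps all of this: the stochastic integral never appears as an object to be estimated.
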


The paper is organized as follows: in Section 2 we describe our framework and formalize all the mathematical concepts utilized in the introduction to present the problem; Section 3 contains the most novel part of our paper that consists in the link between the Wong-Zakai equation (\ref{WZ}) and the semilinear hyperbolic system of partial differential equations (\ref{PDE intro}); here, we describe in details the construction of the mild solution $\{X^{\pi}(t)\}_{t\in [0,T]}$; in Section 4 the proof of Theorem \ref{main theorem 2} on the Fokker-Planck equation passes through a careful interplay between the Gaussian nature of the noise and structure of the hyperbolic system; Section 5 concludes the manuscript with the proof of Theorem \ref{main theorem 3} which greatly benefits from the notion of mild solution introduced in Section 2.  

\section{Notation and preliminary results}\label{notation}

In this section we set the notation and prepare the ground for proving our main theorems. 
We fix a positive time horizon $T$ and a dimension $d\in\mathbb{N}$. Let $(\Omega,\mathcal{F},\mathbb{P})$ be the classical Wiener space over the time interval $[0,T]$ with values on $\mathbb{R}^d$ (see for instance \cite{Bogachev} or \cite{Nualart}); we denote by $\{B(t)=(B_1(t),...,B_d(t))^*\}_{t\in [0,T]}$ the coordinate process, i.e.
\begin{align*}
	B(t):\Omega&\to\mathbb{R}^d\\
	\omega&\mapsto B(t)(\omega):=\omega(t);
\end{align*}
by construction, $\{B(t)\}_{t\in [0,T]}$ is a standard $d$-dimensional Brownian motion.\\
We choose a finite partition $\pi:=\{t_0,...,t_N\}$ of the interval $[0,T]$, i.e.
\begin{align*}
0=t_0<t_1<\cdots<t_N=T,
\end{align*}
and set $\Vert\pi\Vert:=\max_{k\in\{0,1,...,N\}}|t_k-t_{k-1}|$. The real number $\Vert\pi\Vert$ is called \emph{mesh} of the partition $\pi$. We will assume without loss of generality that the partition is equally spaced, i.e. $t_k=\frac{kT}{N}$, for all $k\in\{0,...,N\}$; in this case we simply have $\Vert\pi\Vert:=\frac{T}{N}$ but we will continue to use the notation $\pi=\{t_0,...,t_N\}$ and $\Vert\pi\Vert$.\\
We associate to the partition $\pi$ the \emph{polygonal} approximation of the Brownian motion $\{B(t)\}_{t\in [0,T]}$:
\begin{align}\label{polygonal}
B^{\pi}(t):=\left(1-\frac{t-t_k}{t_{k+1}-t_k}\right)B(t_{k})+\frac{t-t_k}{t_{k+1}-t_k}B(t_{k+1}),\quad\mbox{if }t\in [t_{k},t_{k+1}[
\end{align}
and $B^\pi(T):=B(T)$. It is well known that for any $\varepsilon>0$ and $p\geq 1$ there exists a positive constant $C_{p,T,\varepsilon}$ such that
\begin{align*}
	\left(\mathbb{E}\left[\sup_{t\in [0,T]}|B(t)^{\pi}-B(t)|^p\right]\right)^{1/p}\leq C_{p,T,\varepsilon}\Vert\pi\Vert^{1/2-\varepsilon}.
\end{align*}
We refer the reader to Lemma 11.8 in Hu \cite{Hu} for a sharper estimate. For $i=1,...,d$, we set
\begin{align}\label{Sigma}
\Sigma_i(s,t):=\int_s^t\sigma_i(r)dr,\quad 0\leq s\leq t\leq T,
\end{align}
and observe that 
\begin{align}\label{q}
\int_s^t\sigma_i(r)\dot{B}_i^{\pi}(r)dr=&\int_s^{t_j}\sigma_i(r)\dot{B}_i^{\pi}(r)dr+\int_{t_j}^{t_{j+1}}\sigma_i(r)\dot{B}_i^{\pi}(r)dr+\cdot\cdot\cdot+\int_{t_k}^t\sigma_i(r)\dot{B}_i^{\pi}(r)dr\nonumber\\
=&\Sigma_i(s,t_j)\frac{B_i(t_j)-B_i(t_{j-1})}{h}+\Sigma_i(t_j,t_{j+1})\frac{B_i(t_{j+1})-B_i(t_{j})}{h}\nonumber\\
&+\cdot\cdot\cdot+\Sigma_i(t_k,t)\frac{B_i(t)-B_i(t_k)}{h},
\end{align}
when $t_{j-1}\leq s<t_j<\cdot\cdot\cdot<t_k\leq t$, for some $j\leq k$ in $\{1,...,N-1\}$. In particular, if $s,t\in [t_{k},t_{k+1}]$ for some $k\in\{0,...,N\}$, the last expression simplifies to
\begin{align*}
	\int_s^t\sigma_i(r)\dot{B}_i^{\pi}(r)dr=\Sigma_i(s,t)\frac{B_i(t_{k+1})-B_i(t_{k})}{h}.
\end{align*}
It is important to remark that according to (\ref{q}) the quantity $\int_s^t\sigma_i(r)\dot{B}_i^{\pi}(r)dr$ is a linear combination of independent Gaussian random variables with
\begin{align*}
\mathbb{E}\left[\int_s^t\sigma_i(r)\dot{B}_i^{\pi}(r)dr\right]=0
\end{align*} 
and 
\begin{align*}
	\mathbb{E}\left[\left(\int_s^t\sigma_i(r)\dot{B}_i^{\pi}(r)dr\right)^2\right]=\frac{1}{h}\left(\Sigma_i(s,t_j)^2+\Sigma_i(t_j,t_{j+1})^2+\cdot\cdot\cdot+\Sigma_i(t_k,t)^2\right)
\end{align*} 
We now set 
\begin{align*}
\mathcal{E}_i^{\pi}(s,t)&:=e^{\int_s^t\sigma_i(r)\dot{B}_i^{\pi}(r)dr-\frac{1}{2}\mathbb{E}\left[\left(\int_s^t\sigma_i(r)\dot{B}_i^{\pi}(r)dr\right)^2\right]}
\end{align*}
and observe that if $s,t\in [t_{k},t_{k+1}]$, for some $k\in\{0,...,N\}$, we get 
\begin{align*}
\mathcal{E}_i^{\pi}(s,t)=e^{\Sigma_i(s,t)\frac{B_i(t_{k+1})-B_i(t_{k})}{h}-\frac{1}{2h}\Sigma_i(s,t)^2}.
\end{align*}
It is easy to verify, using the independence of Brownian increments on disjoint subintervals $[t_k,t_{k+1}]$,
that
\begin{align}\label{flow property exponential}
\mathcal{E}_i^{\pi}(s,t_k)\mathcal{E}_i^{\pi}(t_k,t)=\mathcal{E}_i^{\pi}(s,t)
\end{align}
when $s\leq t_k\leq t$ for some $k\in\{1,...,N-1\}$ and $s,t\in [0,T]$.\\ 
A key role in the following will be played by the notion of \emph{Wick product}. The Wick product can be defined for any couple of random variables $X$ and $Y$ belonging to $\mathbb{L}^p(\Omega)$, for some $p> 1$ (see for instance \cite{Hu},\cite{HOUZ} or \cite{Janson}). Nevertheless, it is enough for our purposes to discuss the following two particular cases:
\begin{itemize}
\item if $X$ belongs to the Sobolev-Malliavin space $\mathbb{D}^{1,p}$, for some $p>1$ (see \cite{Nualart}), and $f\in L^2([0,T])$ is a deterministic function, then
\begin{align}\label{def wick first chaos}
X\diamond \int_0^Tf(t)dB_i(t):=X\cdot\int_0^Tf(t)dB_i(t)-D^{(i)}_{f}X,
\end{align}
with $D_f^{(i)}$ being the directional Mallivian derivative with respect to the $i$-th component of the multidimensional Brownian motion $\{B(t)\}_{t\geq 0}$ in the direction $f$;
\item  if $X\in\mathbb{L}^p(\Omega)$, for some $p>1$, and $s,t\in [t_{k},t_{k+1}]$, for some $k\in\{1,...,N-1\}$, we set
\begin{align}\label{def wick exponential}
X\diamond\mathcal{E}_i^{\pi}(s,t):=\mathtt{T}_{-\sigma_i,k}X\cdot \mathcal{E}_i^{\pi}(s,t),
\end{align}
where $\mathtt{T}_{-\sigma_i,k}$ stands for the \emph{translation} operator
\begin{align*}
(\mathtt{T}_{-\sigma_i,k}X)(\omega):=X\left(\omega-e_i\frac{\Sigma_i(s,t)}{h}\int_0^{\cdot}\boldsymbol{1}_{[t_k,t_{k+1}]}(r)dr\right).
\end{align*}
Here, $\{e_1,...,e_d\}$ denotes the canonical bases of $\mathbb{R}^d$ (recall that we are working with a $d$-dimensional Brownian motion and hence $\mathtt{T}_{-\sigma_i,k}$ acts only on the $i$-th component of $\{B(t)\}_{t\in [0,T]}$).
\end{itemize}
We observe that both definitions (\ref{def wick first chaos}) and (\ref{def wick exponential}) are actually consequences of the general definition of Wick product: the first one being related to the interplay between Wick product and Skorohod integral and the latter being nothing else than Gjessing's Lemma (recall the use we made of that in the introduction). Proofs of these facts as implications of the general definition of Wick product can be found in \cite{HOUZ}.  \\ 
It is known (\cite{Janson}) that the translation operator maps $\mathbb{L}^p(\Omega)$ into $\mathbb{L}^q(\Omega)$, for all $q<p$; therefore, since $\mathcal{E}_i^{\pi}(s,t)\in\mathbb{L}^p(\Omega)$ for any $p\geq 1$, we conclude that $X\diamond\mathcal{E}_i^{\pi}(s,t)$ belongs to $\mathbb{L}^q(\Omega)$, for all $q<p$. It is immediate to verify using definition (\ref{def wick exponential}) that
\begin{align*}
\mathcal{E}_i^{\pi}(s,t)\diamond\mathcal{E}_j^{\pi}(s,t)=\mathcal{E}_i^{\pi}(s,t)\cdot\mathcal{E}_j^{\pi}(s,t),\quad\mbox{ if $i\neq j$},
\end{align*}
and
\begin{align*}
	\mathcal{E}_i^{\pi}(s,t_k)\diamond\mathcal{E}_i^{\pi}(t_k,t)=\mathcal{E}_i^{\pi}(s,t_k)\cdot\mathcal{E}_i^{\pi}(t_k,t)=\mathcal{E}_i^{\pi}(s,t),\quad\mbox{ if $s\leq t_k\leq t\leq t_{k+1}$}.
\end{align*}
By means of the last identity we can extend definition (\ref{def wick exponential}) to the case where $s$ and $t$ do not necessarily belong to the same subinterval $[t_{k},t_{k+1}]$. In fact, assume that $t_{k-1}\leq s\leq t_{k}\leq t\leq t_{k+1}$: then,
\begin{align*}
X\diamond\mathcal{E}_i^{\pi}(s,t):=&(X\diamond\mathcal{E}_i^{\pi}(s,t_k))\diamond\mathcal{E}_i^{\pi}(t_k,t)\\
=&(\mathtt{T}_{-\sigma_i,k-1}X\cdot \mathcal{E}_i^{\pi}(s,t_k))\diamond\mathcal{E}_i^{\pi}(t_k,t)\\
=&\mathtt{T}_{-\sigma_i,k}(\mathtt{T}_{-\sigma_i,k-1}X\cdot \mathcal{E}_i^{\pi}(s,t_k))\cdot\mathcal{E}_i^{\pi}(t_k,t)\\
=&\mathtt{T}_{-\sigma_i,k}\mathtt{T}_{-\sigma_i,k-1}X\cdot\mathcal{E}_i^{\pi}(s,t_k)\cdot\mathcal{E}_i^{\pi}(t_k,t)\\
=&\mathtt{T}_{-\sigma_i,k}\mathtt{T}_{\sigma_i,k-1}X\cdot\mathcal{E}_i^{\pi}(s,t).
\end{align*}
The transformation (\ref{def wick exponential}) inherits from the translation operator a monotonicity property:
\begin{align*}
\mbox{if $X\leq Y$, then $X\diamond\mathcal{E}_i^{\pi}(s,t)\leq Y\diamond\mathcal{E}_i^{\pi}(s,t)$}.
\end{align*}
In particular,
\begin{align}\label{wick absolute value}
|X\diamond\mathcal{E}_i^{\pi}(s,t)|\leq |X|\diamond\mathcal{E}_i^{\pi}(s,t).
\end{align}

We are now able to formalize the solution concept that we utilize for solving (\ref{WZ}).

\begin{definition}\label{def solution}
A $d$-dimensional stochastic process $\{X^{\pi}(t)\}_{t\in [0,T]}$ is said to be a \emph{mild} solution of equation (\ref{WZ}) if:	
\begin{enumerate}
	\item the function $t\mapsto X^{\pi}(t)$ is almost surely continuous;
	\item for $i=1,...,d$ and $t\in [0,T]$, the random variable $X_i(t)$ belongs to $\mathbb L^p(\Omega)$ for some $p>1$;
	\item for $i=1,...,d$, the identity
	\begin{align}\label{solution}
	X_i^{\pi}(t)=c_i\mathcal E_i^{\pi}(0,t)+\int_0^t b_i(s,X^{\pi}(s))\diamond \mathcal E_i^{\pi}(s,t)ds,\quad t\in [0,T],
	\end{align}
	holds almost surely.
\end{enumerate}
\end{definition}
	
\begin{remark}
The way one can go from (\ref{WZ}) to (\ref{solution}) is pretty similar to the reduction method described in the introduction for the scalar case. Namely, if we Wick-multiply by $\mathtt E_i^{\pi}(0,t)$ both sides of  
\begin{align*}
\frac{dX_i^{\pi}(t)}{dt}=b_i(t,X^{\pi}(t))+\sigma_i(t)X^{\pi}_i(t)\diamond \frac{dB_i^{\pi}(t)}{dt},
\end{align*}
and employ the properties of Wick product mentioned there, we will end up with the corresponding multidimensional analogue of (\ref{zx1}), i.e.
\begin{align}\label{zx1 bis}
\frac{d\mathcal{X}_i^{\pi}(t)}{dt} =b_i(t,\mathcal{X}^{\pi}(t)\diamond\mathcal{E}_i^{\pi}(0,t))\diamond\mathtt{E}_i^{\pi}(0,t),
\end{align}
where
\begin{align*}
\mathcal{X}_i^{\pi}(t):=X_i^{\pi}(t)\diamond\mathtt{E}_i^{\pi}(0,t),\quad t\in [0,T].
\end{align*}
We now write (\ref{zx1 bis}) in the integral form
\begin{align*}
\mathcal{X}_i^{\pi}(t) =c_i+\int_0^tb_i(s,\mathcal{X}^{\pi}(s)\diamond\mathcal{E}_i^{\pi}(0,s))\diamond\mathtt{E}_i^{\pi}(0,s)ds;
\end{align*}
this identity together with
\begin{align*}
X_i^{\pi}(t)=\mathcal{X}_i^{\pi}(t)\diamond\mathcal{E}_i^{\pi}(0,t),
\end{align*}
gives
\begin{align*}
X_i^{\pi}(t)\diamond\mathtt{E}_i^{\pi}(0,t) =c_i+\int_0^tb_i(s,X_i^{\pi}(t))\diamond\mathtt{E}_i^{\pi}(0,s)ds.
\end{align*}
If we now Wick-multiply both sides above by $\mathcal{E}_i^{\pi}(0,t)$, we obtain (\ref{solution}). We recall that the application of Gjessing's Lemma here doesn't reduce the previous equation to a random ordinary differential equation and hence to prove the existence of a solution for (\ref{solution}) we have to resort to the technique described in the next section.
\end{remark}

\section{Proof of Theorem \ref{main theorem 1}}\label{main theorem 1 proof}

\subsection{An auxiliary semilinear hyperbolic system of PDEs}

To prove the existence of a mild solution for equation (\ref{WZ}) we introduce the following auxiliary semilinear hyperbolic system of partial differential equations
\begin{align}\label{PDE}
\begin{cases}
\partial_tu_i(t,x)=-\sigma_i(t)\partial_{x_i} u_i(t,x)+\sigma_i(t)\frac{x_i}{h}u_i(t,x)+b_i(t,u(t,x)),\\ 
\quad\quad\mbox{for }t\in]r,R], x\in\mathbb{R}^d\mbox{ and }i=1,...,d;\\
u_i(r,x)=\alpha_i,\quad\mbox{for } x\in\mathbb{R}^d\mbox{ and }i=1,...,d,
\end{cases}
\end{align}
where $\alpha_1,...,\alpha_d$ are constant initial conditions and $h$ denotes the mesh of the partition under consideration. The validity of Assumption \ref{assumptions} implies the existence of a unique classical solution for the Cauchy problem (\ref{PDE}) (see for instance \cite{Bressan} and \cite{Taylor}). \\
Now, if $u$ solves (\ref{PDE}), then from the trivial identity
\begin{align*}
\partial_{x_i}\left(u_i(t,x)e^{-\frac{|x|^2}{2h}}\right)=\partial_{x_i}u_i(t,x)e^{-\frac{|x|^2}{2h}}-\frac{x_i}{h}u_i(t,x)e^{-\frac{|x|^2}{2h}},
\end{align*}
we can argue that the function
\begin{align}\label{def v}
v(t,x):=u(t,x)e^{-\frac{|x|^2}{2h}},\quad t\in[r,R],x\in\mathbb{R}^d,
\end{align}  
is a classical solution of
\begin{align}\label{PDE v}
\begin{cases}
\partial_tv_i(t,x)=-\sigma_i(t)\partial_{x_i} v_i(t,x)+b_i\left(t,v(t,x)e^{\frac{|x|^2}{2h}}\right)e^{-\frac{|x|^2}{2h}},\\ 
\quad\quad\mbox{for }t\in]r,R], x\in\mathbb{R}^d\mbox{ and }i=1,...,d;\\
v_i(r,x)=\alpha_ie^{-\frac{|x|^2}{2h}},\quad\mbox{for } x\in\mathbb{R}^d\mbox{ and }i=1,...,d.
\end{cases}
\end{align}
Rewriting system (\ref{PDE v}) in the mild form
\begin{align*}
\begin{cases}
v_i(t,x)=\alpha_ie^{-\frac{|x-\Sigma_i(r,t)e_i|^2}{2h}}\\
\quad\quad\quad\quad
+\int_r^tb_i\left(t,v(s,x-\Sigma_i(s,t)e_i)e^{\frac{|x-\Sigma_i(s,t)e_i|^2}{2h}}\right)e^{-\frac{|x-\Sigma_i(s,t)e_i|^2}{2h}}ds\\
\mbox{for }t\in[r,R], x\in\mathbb{R}^d\mbox{ and }i=1,...,d,
\end{cases}
\end{align*}
(recall the definition of $\Sigma_i(s,t)$ in (\ref{Sigma})) and using identity (\ref{def v}), we obtain that $u$ solves
\begin{align*}
\begin{cases}
u_i(t,x)e^{-\frac{|x|^2}{2h}}=\alpha_ie^{-\frac{|x-\Sigma_i(r,t)e_i|^2}{2h}}\\
\quad\quad\quad\quad+\int_r^tb_i\left(t,u(s,x-\Sigma_i(s,t)e_i)\right)e^{-\frac{|x-\Sigma_i(s,t)e_i|^2}{2h}}ds\\
\mbox{for }t\in[r,R], x\in\mathbb{R}^d\mbox{ and }i=1,...,d,
\end{cases}
\end{align*}
or equivalently,
\begin{align}\label{PDE to SDE}
\begin{cases}
u_i(t,x)=\alpha_ie^{\frac{x_i}{h}\Sigma_i(r,t)-\frac{1}{2h}\Sigma_i(r,t)^2}\\
\quad\quad\quad\quad\quad
+\int_r^tb_i\left(t,u(s,x-\Sigma_i(s,t)e_i)\right)e^{\frac{x_i}{h}\Sigma_i(s,t)-\frac{1}{2h}\Sigma_i(s,t)^2}ds\\
\mbox{for }t\in[r,R], x\in\mathbb{R}^d\mbox{ and }i=1,...,d.
\end{cases}
\end{align}
Note that from the previous identity we get the estimate
\begin{align}\label{estimate}
|u_i(t,x)|\leq&|\alpha_i|e^{\frac{x_i}{h}\Sigma_i(r,t)-\frac{1}{2h}\Sigma_i(r,t)^2}\nonumber\\
&+\int_r^t|b_i\left(t,u(s,x-\Sigma_i(s,t)e_i)\right)|e^{\frac{x_i}{h}\Sigma_i(s,t)-\frac{1}{2h}\Sigma_i(s,t)^2}ds\nonumber\\
\leq&|\alpha_i|e^{\frac{x_i}{h}\Sigma_i(r,t)-\frac{1}{2h}\Sigma_i(r,t)^2}+M\int_r^te^{\frac{x_i}{h}\Sigma_i(s,t)-\frac{1}{2h}(\Sigma_i(s,t)^2}ds\nonumber\\
\leq&|\alpha_i|e^{\frac{x_i}{h}\Sigma_i(r,t)}+M\int_r^te^{\frac{x_i}{h}\Sigma_i(s,t)}ds.
\end{align}
Here, $M$ denotes a positive constant satisfying $|b_i(t,x)|\leq M$, for all $t\in [0,T]$, $x\in\mathbb{R}^d$ and $i=1,...,d$.

\subsection{Construction of a mild solution for (\ref{WZ})}

In the sequel, in order to stress the dependence on specific initial conditions, we will write 
\begin{align*}
u(t,x;r,\alpha)=(u_1(t,x;r,\alpha),...,u_d(t,x;r,\alpha))^*,\quad t\in [r,R], x\in\mathbb{R}^d
\end{align*} 
to denote the unique classical solution of (\ref{PDE}). We define the process $\{X^{\pi}(t)\}_{t\in [0,T]}$ inductively:
\begin{align}\label{def X_t}
X^{\pi}(t):=
\begin{cases}
u(t,B(t_1);0,c),&\mbox{ if }t\in [0,t_1];\\
u(t,B(t_2)-B(t_1);t_1,X^{\pi}(t_1)),&\mbox{ if }t\in ]t_1,t_2];\\
\quad\quad\cdot\cdot\cdot&\quad \cdot\cdot\cdot\\
u(t,B(T)-B(t_{N-1});t_{N-1},X^{\pi}(t_{N-1})),&\mbox{ if }t\in ]t_{N-1},T].
\end{cases}
\end{align}

We now verify that $X(t)$ is a mild solution of (\ref{WZ}), that is we check the conditions of Definition \ref{def solution}. 

The almost sure continuity of $t\in [0,T]\mapsto X(t)$ follows immediately from the continuity of $t\in[r,T]\mapsto u(t,x;r,\alpha)$, for all $x\in\mathbb{R}^d$ and $\alpha\in\mathbb{R}^d$ ($u$ is a classical solution of (\ref{PDE})) and the fact that for all $k\in \{1,...,N-1\}$ we have by construction
\begin{align*}
\lim_{t\to t_{k}^-}X^{\pi}(t)=\lim_{t\to t_{k}^+}X^{\pi}(t).
\end{align*}

We now verify that $X^{\pi}(t)\in\mathbb{L}^p(\Omega)$, for some $p>1$ and all $t\in [0,T]$. If $t\in [0,t_1]$, then by the definition of $X^{\pi}(t)$ and estimate (\ref{estimate}) we can write
\begin{align*}
|X^{\pi}_i(t)|=&|u(t,B(t_1);0,c)|\\
 \leq&|c_i|e^{\frac{B_i(t_1)}{h}\Sigma_i(0,t)}+M\int_0^te^{\frac{B_i(t_1)}{h}\Sigma_i(s,t)}ds,
\end{align*}
and hence
\begin{align}\label{estimate on first interval}
\Vert X^{\pi}_i(t)\Vert_p\leq&|c_i|\Vert e^{\frac{B_i(t_1)}{h}\Sigma_i(0,t)}\Vert_p+M\int_0^t\Vert e^{\frac{B_i(t_1)}{h}\Sigma_i(s,t)}\Vert_pds\nonumber\\
=&|c_i|e^{p\frac{\Sigma_i(0,t)^2}{2h}}+M\int_0^te^{p\frac{\Sigma_i(s,t)^2}{2h}}ds\nonumber\\
=&|c_i|e^{p\frac{\Sigma_i(0,t)^2}{2h}}+Mte^{\frac{p}{2h}\sup_{s\in [0,t]}\Sigma_i(s,t)^2}.
\end{align}
This proves the membership of $X^{\pi}_i(t)$ to $\mathbb{L}^p(\Omega)$, for all $i=1,...,d$, $t\in [0,t_1]$ and $p\geq 1$. Let us now take $t\in ]t_1,t_2]$; again, by the definition of $X^{\pi}(t)$ and estimate (\ref{estimate}) we can write
\begin{align*}
|X^{\pi}_i(t)|=&|u(t,B(t_2)-B(t_1);t_1,X^{\pi}(t_1))|\\
\leq&|X^{\pi}_i(t_1)|e^{\frac{B_i(t_2)-B_i(t_1)}{h}\Sigma_i(t_1,t)}+M\int_{t_1}^te^{\frac{B_i(t_2)-B_i(t_1)}{h}\Sigma_i(s,t)}ds,
\end{align*} 
and hence, using H\"older inequality,
\begin{align*}
\Vert X^{\pi}_i(t)\Vert_p\leq&\Vert |X^{\pi}_i(t_1)| e^{\frac{B_i(t_2)-B_i(t_1)}{h}\Sigma_i(t_1,t)}\Vert_p+M\int_{t_1}^t\Vert e^{\frac{B_i(t_2)-B_i(t_1)}{h}\Sigma_i(s,t)}\Vert_pds\\
\leq &\Vert X^{\pi}_i(t_1)\Vert_q\Vert e^{\frac{B_i(t_2)-B_i(t_1)}{h}\Sigma_i(t_1,t)}\Vert_{q'}+M\int_{t_1}^t\Vert e^{\frac{B_i(t_2)-B_i(t_1)}{h}\Sigma_i(s,t)}\Vert_pds\\
\leq &\Vert X^{\pi}_i(t_1)\Vert_q e^{q'\frac{\Sigma_i(t_1,t)^2}{2h}}+M\int_{t_1}^te^{p\frac{\Sigma_i(s,t)^2}{2h}}ds\\
\leq &\Vert X^{\pi}_i(t_1)\Vert_q e^{q'\frac{\Sigma_i(t_1,t)^2}{2h}}+M(t-t_1)e^{\frac{p}{2h}\sup_{s\in [t_1,t]}\Sigma_i(s,t)^2}.
\end{align*} 
This last estimate combined with (\ref{estimate on first interval}) provides the desired upper bound for $\Vert X^{\pi}_i(t)\Vert_p$ on the interval $]t_1,t_2]$. It also clear that in a similar manner one obtains analogous estimates for the $\mathbb{L}^p(\Omega)$-norm of $X_i(t)$ on any subinterval $]t_{k},t_{k+1}]$ for $k=2,...,N-1$.   \\

We are left with the verification that $\{X^{\pi}(t)\}_{t\in [0,T]}$ as defined in (\ref{def X_t}) satisfies identity (\ref{solution}). To this aim we prove the following auxiliary result.

\begin{proposition}
Identity (\ref{solution}) is equivalent to 
	\begin{align}\label{solution2}
		X_i^{\pi}(t)=X_i^{\pi}(t_{k-1})\diamond\mathcal E_i^{\pi}(t_{k-1},t)+\int_{t_{k-1}}^t b_i(s,X^{\pi}(s))\diamond \mathcal E_i^{\pi}(s,t)ds,\quad t\in [t_{k-1},t_{k}], 
	\end{align}
for all $k\in \{1,...,N\}$.
\end{proposition}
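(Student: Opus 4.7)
The plan is to derive the equivalence from the flow property of the Wick exponential (\ref{flow property exponential}) together with associativity and linearity of the Wick product. The key observation is that, because $\mathcal{E}_i^{\pi}(s,t_{k-1})$ and $\mathcal{E}_i^{\pi}(t_{k-1},t)$ depend on $B_i$ through increments on disjoint sub-intervals, the argument used in Section \ref{notation} to extend (\ref{def wick exponential}) across partition points gives, for every $s\le t_{k-1}\le t$ and every $Z\in\mathbb{L}^p(\Omega)$, the ``mixed'' flow identity
$$Z\diamond\mathcal{E}_i^{\pi}(s,t)=\bigl(Z\diamond\mathcal{E}_i^{\pi}(s,t_{k-1})\bigr)\diamond\mathcal{E}_i^{\pi}(t_{k-1},t).$$
Moreover, because the operators $\mathtt{T}_{-\sigma_i,\cdot}$ appearing in (\ref{def wick exponential}) are linear and bounded from $\mathbb{L}^p(\Omega)$ to $\mathbb{L}^q(\Omega)$ for $q<p$, Wick multiplication by $\mathcal{E}_i^{\pi}(t_{k-1},t)$ commutes with Bochner integration over $[0,t_{k-1}]$.

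For the direction (\ref{solution}) $\Longrightarrow$ (\ref{solution2}), I would fix $k\in\{1,\dots,N\}$ and $t\in[t_{k-1},t_k]$, split the integral in (\ref{solution}) at $t_{k-1}$, and obtain
$$X_i^{\pi}(t)=c_i\mathcal{E}_i^{\pi}(0,t)+\int_0^{t_{k-1}}b_i(s,X^{\pi}(s))\diamond\mathcal{E}_i^{\pi}(s,t)\,ds+\int_{t_{k-1}}^t b_i(s,X^{\pi}(s))\diamond\mathcal{E}_i^{\pi}(s,t)\,ds.$$
Applying the mixed flow identity to each term of the first integral and to $c_i\mathcal{E}_i^{\pi}(0,t)=\bigl(c_i\mathcal{E}_i^{\pi}(0,t_{k-1})\bigr)\diamond\mathcal{E}_i^{\pi}(t_{k-1},t)$, and then pulling the common factor $\diamond\mathcal{E}_i^{\pi}(t_{k-1},t)$ out of the integral, the first two summands collapse to
$$\Bigl[c_i\mathcal{E}_i^{\pi}(0,t_{k-1})+\int_0^{t_{k-1}}b_i(s,X^{\pi}(s))\diamond\mathcal{E}_i^{\pi}(s,t_{k-1})\,ds\Bigr]\diamond\mathcal{E}_i^{\pi}(t_{k-1},t)=X_i^{\pi}(t_{k-1})\diamond\mathcal{E}_i^{\pi}(t_{k-1},t),$$
the last equality being (\ref{solution}) evaluated at $t_{k-1}$. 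Together with the remaining integral this yields (\ref{solution2}).

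For the converse I would argue by induction on $k$. The base case $k=1$, $t\in[0,t_1]$, is immediate since (\ref{solution2}) reduces to (\ref{solution}) after using $X_i^{\pi}(0)=c_i$ and $c_i\diamond\mathcal{E}_i^{\pi}(0,t)=c_i\mathcal{E}_i^{\pi}(0,t)$. For the inductive step, assuming (\ref{solution}) at time $t_{k-1}$, I substitute the resulting expression for $X_i^{\pi}(t_{k-1})$ into the right-hand side of (\ref{solution2}), distribute $\diamond\mathcal{E}_i^{\pi}(t_{k-1},t)$ across the sum and inside the integral, and reassemble via the mixed flow identity to recover (\ref{solution}) on $[t_{k-1},t_k]$. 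The main obstacle is really bookkeeping: one must justify that Wick multiplication by $\mathcal{E}_i^{\pi}(t_{k-1},t)$ genuinely commutes with the Bochner integral on $[0,t_{k-1}]$ in the appropriate $\mathbb{L}^q$ sense, and that the iterated translation operators encoding $\diamond\mathcal{E}_i^{\pi}(s,t)$ compose correctly when $s$ and $t$ straddle several grid points of $\pi$. Once these technicalities are settled, the proof collapses to the algebraic manipulations above.
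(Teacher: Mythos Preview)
Your proposal is correct and follows essentially the same route as the paper: split the integral in (\ref{solution}) at $t_{k-1}$, apply the flow identity $\mathcal{E}_i^{\pi}(s,t)=\mathcal{E}_i^{\pi}(s,t_{k-1})\diamond\mathcal{E}_i^{\pi}(t_{k-1},t)$, factor out $\diamond\,\mathcal{E}_i^{\pi}(t_{k-1},t)$, and recognise the bracket as $X_i^{\pi}(t_{k-1})$; for the converse the paper phrases the argument as iterated back-substitution of (\ref{solution2}) rather than formal induction on $k$, but this is the same computation. Your explicit remarks on commuting $\diamond\,\mathcal{E}_i^{\pi}(t_{k-1},t)$ with the Bochner integral and on the composition of the translation operators across grid points are justifications the paper leaves implicit.
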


\begin{proof}
	Assume identity (\ref{solution}) to be true; then, for $t\in]t_{k-1},t_k]$ we can write
	\begin{align*}
		X_i^{\pi}(t)=&c_i\mathcal{E}_i^{\pi}(0,t)+\int_0^tb_i(s,X^{\pi}(s))\diamond\mathcal{E}_i^{\pi}(s,t)ds\\
		=&c_i\mathcal{E}_i^{\pi}(0,t)+\int_0^{t_{k-1}}b_i(s,X^{\pi}(s))\diamond\mathcal{E}_i^{\pi}(s,t)ds++\int_{t_{k-1}}^tb_i(s,X^{\pi}(s))\diamond\mathcal{E}_i^{\pi}(s,t)ds\\
		=&c_i\mathcal{E}_i^{\pi}(0,t_{k-1})\diamond\mathcal{E}_i^{\pi}(t_{k-1},t)+
		\int_{0}^{t_{k-1}} b_i(s,X^{\pi}(s))\diamond\mathcal{E}_i^{\pi}(s,t_{k-1})\diamond\mathcal{E}_i^{\pi}(t_{k-1},t)ds\\
		&+ \int_{t_{k-1}}^t b_i(s,X^{\pi}(s))\diamond\mathcal{E}_i^{\pi}(s,t)ds\\
		=&c_i\mathcal{E}_i^{\pi}(0,t_{k-1})\diamond\mathcal{E}_i^{\pi}(t_{k-1},t)+\left(
		\int_{0}^{t_{k-1}} b_i(s,X^{\pi}(s))\diamond\mathcal{E}_i^{\pi}(s,t_{k-1})ds\right)\diamond\mathcal{E}_i^{\pi}(t_{k-1},t)\\
		&+\int_{t_{k-1}}^t b_i(s,X^{\pi}(s))\diamond\mathcal{E}_i^{\pi}(s,t)ds\\
		=&\left(c_i\mathcal{E}_i^{\pi}(0,t_{k-1})+
		\int_{0}^{t_{k-1}} b_i(s,X^{\pi}(s))\diamond\mathcal{E}_i^{\pi}(s,t_{k-1})ds\right)\diamond\mathcal{E}_i^{\pi}(t_{k-1},t)\\
		&+\int_{t_{k-1}}^t b_i(s,X^{\pi}(s))\diamond\mathcal{E}_i^{\pi}(s,t)ds\\
		=&X_i^{\pi}(t_{k-1})\diamond\mathcal E_i^{\pi}(t_{k-1},t)+\int_{t_{k-1}}^t b_i(s,X^{\pi}(s))\diamond \mathcal E_i^{\pi}(s,t)ds.
	\end{align*}
This proves (\ref{solution2}). If we now start from (\ref{solution2}) and replace iteratively $X_i^{\pi}(t_{k-1})$ with
\begin{align*}
X_i^{\pi}(t_{k-2})\diamond\mathcal E_i^{\pi}(t_{k-2},t_{k-1})+\int_{t_{k-2}}^{t_{k-1}} b_i(s,X^{\pi}(s))\diamond \mathcal E_i^{\pi}(s,t)ds,
\end{align*}
and then replace $X_i^{\pi}(t_{k-2})$ with
\begin{align*}
X_i^{\pi}(t_{k-3})\diamond\mathcal E_i^{\pi}(t_{k-3},t_{k-2})+\int_{t_{k-3}}^{t_{k-2}} b_i(s,X^{\pi}(s))\diamond \mathcal E_i^{\pi}(s,t)ds,
\end{align*}
and so on, we will end up with (\ref{solution}).	
\end{proof}

\begin{remark}
We observe that according to the definition of $X^{\pi}(t)$ in (\ref{def X_t}), for any $k\in\{1,...,N\}$ and $t\leq t_{k-1}$ the random vector $X^{\pi}(t)$ depends only on the Brownian increments on the intervals $[0,t_1]$,...,$[t_{k-2},t_{k-1}]$. Therefore, the term
\begin{align*}
X_i^{\pi}(t_{k-1})\diamond\mathcal E_i^{\pi}(t_{k-1},t),\quad t\in ]t_{k-1},t_k]
\end{align*}
in (\ref{solution2}) can be rewritten for our particular mild solution as 
\begin{align*}
X_i^{\pi}(t_{k-1})\mathcal E_i^{\pi}(t_{k-1},t),\quad t\in ]t_{k-1},t_k].
\end{align*}
In fact, according to (\ref{def wick exponential}) one has
\begin{align*}
X_i^{\pi}(t_{k-1})\diamond\mathcal E_i^{\pi}(t_{k-1},t)&=\mathtt{T}_{-\sigma_i,k-1}X_i^{\pi}(t_{k-1})\mathcal E_i^{\pi}(t_{k-1},t)\\
&=X_i^{\pi}(t_{k-1})\mathcal E_i^{\pi}(t_{k-1},t).
\end{align*}
(The translation acts on a part of Brownian path which is disjoint from the increments on which $X_i^{\pi}(t_{k-1})$ depends).
\end{remark}

We are now ready to prove that $X^{\pi}(t)$ defined in (\ref{def X_t}) verifies identity (\ref{solution}) through the equivalent equalities (\ref{solution2}). Let $t\in [0,t_1]$; then, identity (\ref{PDE to SDE}) and definition (\ref{def X_t}) give
\begin{align*}
X^{\pi}_i(t)=&u_i(t,B(t_1);0,c)\\
=&c_ie^{\frac{B_i(t_1)}{h}\Sigma_i(0,t)-\frac{1}{2h}\Sigma_i(0,t)^2}\\
&+\int_0^tb_i\left(t,u(s,B(t_1)-\Sigma_i(s,t)e_i;0,c)\right)e^{\frac{B_i(t_1)}{h}\Sigma_i(s,t)-\frac{1}{2h}\Sigma_i(s,t)^2}ds\\
=&c_i\mathcal{E}_i^{\pi}(0,t)+\int_0^tb_i\left(t,u(s,B(t_1)-\Sigma_i(s,t)e_i;0,c)\right)\mathcal{E}_i^{\pi}(s,t)ds\\
=&c_i\mathcal{E}_i^{\pi}(0,t)+\int_0^t\mathtt{T}_{-\sigma_i,0}b_i\left(t,u(s,B(t_1);0,c)\right)\mathcal{E}_i^{\pi}(s,t)ds\\
=&c_i\mathcal{E}_i^{\pi}(0,t)+\int_0^tb_i\left(t,u(s,B(t_1);0,c)\right)\diamond\mathcal{E}_i^{\pi}(s,t)ds\\
=&c_i\mathcal{E}_i^{\pi}(0,t)+\int_0^tb_i\left(t,X^{\pi}(s)\right)\diamond\mathcal{E}_i^{\pi}(s,t)ds.
\end{align*}
This corresponds to (\ref{solution2}) for $t\in [0,t_1]$. Let us now consider the general subinterval $]t_{k},t_{k+1}]$, with $k\in \{1,...,N-1\}$; identity (\ref{PDE to SDE}) and definition (\ref{def X_t}) give
\begin{align*}
	X^{\pi}_i(t)=&u_i(t,B(t_{k+1})-B(t_k);t_k,X^{\pi}(t_k))\\
	=&X^{\pi}_i(t_k)e^{\frac{B_i(t_{k+1})-B_i(t_k)}{h}\Sigma_i(t_k,t)-\frac{1}{2h}\Sigma_i(t_k,t)^2}\\
	&+\int_{t_k}^tb_i\left(t,u(s,B(t_{k+1})-B(t_k)-\Sigma_i(s,t)e_i;t_k,X^{\pi}(t_k))\right)\times\\
	&\times e^{\frac{B(t_{k+1})-B(t_k)}{h}\Sigma_i(s,t)-\frac{1}{2h}\Sigma_i(s,t)^2}ds\\
	=&X^{\pi}_i(t_k)\mathcal{E}_i^{\pi}(t_k,t)\\
	&+\int_{t_k}^tb_i\left(t,u(s,B(t_{k+1})-B(t_k)-\Sigma_i(s,t)e_i;t_k,X^{\pi}(t_k))\right)\mathcal{E}_i^{\pi}(s,t)ds\\
	=&X_i^{\pi}(t_k)\mathcal{E}_i^{\pi}(t_k,t)+\int_{t_k}^t\mathtt{T}_{-\sigma_i,k}b_i\left(t,u(s,B(t_{k+1})-B(t_k);t_k,X^{\pi}(t_k))\right)\mathcal{E}_i^{\pi}(s,t)ds\\
	=&X_i^{\pi}(t_k)\mathcal{E}_i^{\pi}(t_k,t)+\int_{t_k}^tb_i\left(t,u(s,B(t_{k+1})-B(t_k);t_k,X^{\pi}(t_k))\right)\diamond\mathcal{E}_i^{\pi}(s,t)ds\\
	=&X_i^{\pi}(t_k)\mathcal{E}_i^{\pi}(t_k,t)+\int_{t_k}^tb_i\left(t,X^{\pi}(s)\right)\diamond\mathcal{E}_i^{\pi}(s,t)ds.
\end{align*}
This corresponds to (\ref{solution2}) and the proof is complete.

\section{Proof of Theorem \ref{main theorem 2}}\label{main theorem 2 proof}

Let $\varphi\in C^2_0([0,T]\times\mathbb R^d)$; then,
\begin{align*}
0=&\varphi(T,X^{\pi}(T))-\varphi(0,c)\\
=&\sum_{k=1}^N\varphi(t_k,X^{\pi}(t_k))-\varphi(t_{k-1},X^{\pi}(t_{k-1}))\\
=&\sum_{k=1}^N\int_{t_{k-1}}^{t_k}\left[\partial_t\varphi(t,X^{\pi}(t))+\sum_{i=1}^d\partial_{x_i}\varphi(t,X^{\pi}(t))\frac{d}{dt}X^{\pi}_i(t)\right]dt\\
=&\sum_{k=1}^N\int_{t_{k-1}}^{t_k}\partial_t\varphi(t,u(t,B(t_k)-B(t_{k-1});t_{k-1},X^{\pi}(t_{k-1})))dt\\
&+\sum_{k=1}^N\int_{t_{k-1}}^{t_k}\sum_{i=1}^d\partial_{x_i}\varphi(t,u(t,B(t_k)-B(t_{k-1});t_{k-1},X^{\pi}(t_{k-1})))\times\\
&\times\partial_tu_i(t,B(t_k)-B(t_{k-1});t_{k-1},X^{\pi}(t_{k-1}))dt.
\end{align*}
To ease the notation, we now suppress the explicit dependence on the initial conditions in the function $u$ and set $Z(k):=B(t_k)-B(t_{k-1})$; therefore, the previous identity reads
\begin{align}\label{z}
0=&\sum_{k=1}^N\int_{t_{k-1}}^{t_k}\partial_t\varphi(t,u(t,Z(k)))dt+\sum_{k=1}^N\sum_{i=1}^d\int_{t_{k-1}}^{t_k}\partial_{x_i}\varphi(t,u(t,Z(k)))\partial_tu_i(t,Z(k))dt.
\end{align}
We recall that $u_i$ is a classical solution of (\ref{PDE}) and hence we get
\begin{align*}
\partial_tu_i(t,Z(k))=-\sigma_i(t)\partial_{x_i} u_i(t,Z(k))+\sigma_i(t)\frac{Z_i(k)}{h}u_i(t,Z(k))+b_i(t,u(t,Z(k))). 
\end{align*}
Substituting this identity into (\ref{z}) yields
\begin{align*}
0=&\sum_{k=1}^N\int_{t_{k-1}}^{t_k}\partial_t\varphi(t,u(t,Z(k)))dt\nonumber\\
&-\sum_{k=1}^N\sum_{i=1}^d\int_{t_{k-1}}^{t_k}\partial_{x_i}\varphi(t,u(t,Z(k)))\sigma_i(t)\partial_{x_i} u_i(t,Z(k))dt\\
&+\sum_{k=1}^N\sum_{i=1}^d\int_{t_{k-1}}^{t_k}\partial_{x_i}\varphi(t,u(t,Z(k)))\sigma_i(t)\frac{Z_i(k)}{h}u_i(t,Z(k))dt\\
&+\sum_{k=1}^N\sum_{i=1}^d\int_{t_{k-1}}^{t_k}\partial_{x_i}\varphi(t,u(t,Z(k)))b_i(t,u(t,Z(k)))dt\\
=&\mathcal{A}-\mathcal{B}+\mathcal{C}+\mathcal{D},
\end{align*}
where
\begin{align*}
\mathcal{A}&:=\sum_{k=1}^N\int_{t_{k-1}}^{t_k}\partial_t\varphi(t,u(t,Z(k)))dt,\\
\mathcal{B}&:=\sum_{k=1}^N\sum_{i=1}^d\int_{t_{k-1}}^{t_k}\partial_{x_i}\varphi(t,u(t,Z(k)))\sigma_i(t)\partial_{x_i} u_i(t,Z(k))dt,\\
\mathcal{C}&:=\sum_{k=1}^N\sum_{i=1}^d\int_{t_{k-1}}^{t_k}\partial_{x_i}\varphi(t,u(t,Z(k)))\sigma_i(t)\frac{Z_i(k)}{h}u_i(t,Z(k))dt,\\
\mathcal{D}&:=\sum_{k=1}^N\sum_{i=1}^d\int_{t_{k-1}}^{t_k}\partial_{x_i}\varphi(t,u(t,Z(k)))b_i(t,u(t,Z(k)))dt.
\end{align*}
We now take the expectation of the first and last members above and get
\begin{align}\label{zz}
0=\mathbb{E}[\mathcal{A}]-\mathbb{E}[\mathcal{B}]+\mathbb{E}[\mathcal{C}]+\mathbb{E}[\mathcal{D}].
\end{align}
Let us analyse $\mathbb{E}[\mathcal{C}]$: 
\begin{align}\label{x}
\mathbb{E}[\mathcal{C}]=&\sum_{k=1}^N\sum_{i=1}^d\int_{t_{k-1}}^{t_k}\mathbb{E}\left[\partial_{x_i}\varphi(t,u(t,Z(k)))\sigma_i(t)\frac{Z_i(k)}{h}u_i(t,Z(k))\right]dt\nonumber\\
=&\sum_{k=1}^N\sum_{i=1}^d\int_{t_{k-1}}^{t_k}\mathbb{E}\left[\mathbb{E}\left[\partial_{x_i}\varphi(t,u(t,Z(k)))\sigma_i(t)\frac{Z_i(k)}{h}u_i(t,Z(k))\Big|\mathcal{F}_{t_{k-1}}\right]\right]dt;
\end{align}
here $\{\mathcal{F}_t\}_{t\in [0,T]}$ stands for the natural filtration of the Brownian motion $\{B(t)\}_{t\in [0,T]}$. We remark that $u(t,Z(k))$ depends implicitly also on the increments $Z(1),....,Z(k-1)$ through the initial condition; however, these increments are measurable with respect to the sigma-algebra $\mathcal{F}_{t_{k-1}}$. Therefore, the conditional expectation can be computed as follows
\begin{align*}
&\mathbb{E}\left[\partial_{x_i}\varphi(t,u(t,Z(k)))\sigma_i(t)\frac{Z_i(k)}{h}u_i(t,Z(k))\Big|\mathcal{F}_{t_{k-1}}\right]\\
&\quad=\int_{\mathbb{R}^d}\partial_{x_i}\varphi(t,u(t,x))\sigma_i(t)\frac{x_i}{h}u_i(t,x)\frac{e^{-|x|^2/2h}}{(2\pi h)^{d/2}}dx\\
&\quad=-\int_{\mathbb{R}^d}\partial_{x_i}\varphi(t,u(t,x))\sigma_i(t)u_i(t,x)\partial_{x_i}\left(\frac{e^{-|x|^2/2h}}{(2\pi h)^{d/2}}\right)dx\\
&\quad=\int_{\mathbb{R}^d}\sigma_i(t)\partial_{x_i}\left(\partial_{x_i}\varphi(t,u(t,x))u_i(t,x)\right)\frac{e^{-|x|^2/2h}}{(2\pi h)^{d/2}}dx\\
&\quad=\sum_{j=1}^d\int_{\mathbb{R}^d}\sigma_i(t)\partial_{x_j}\partial_{x_i}\varphi(t,u(t,x))\partial_{x_i}u_j(t,x)u_i(t,x)\frac{e^{-|x|^2/2h}}{(2\pi h)^{d/2}}dx\\
&\quad\quad+\int_{\mathbb{R}^d}\sigma_i(t)\partial_{x_i}\varphi(t,u(t,x))\partial_{x_i}u_i(t,x)\frac{e^{-|x|^2/2h}}{(2\pi h)^{d/2}}dx\\
&\quad=\mathbb{E}\left[\sum_{j=1}^d\sigma_i(t)\partial_{x_j}\partial_{x_i}\varphi(t,u(t,Z(k)))\partial_{x_i}u_j(t,Z(k))u_i(t,Z(k))\Big|\mathcal{F}_{t_{k-1}}\right]\\
&\quad\quad+\mathbb{E}\left[\sigma_i(t)\partial_{x_i}\varphi(t,u(t,Z(k)))\partial_{x_i}u_i(t,Z(k))\Big|\mathcal{F}_{t_{k-1}}\right];
\end{align*}
in the third equality we performed an integration by parts. Inserting the last expression in (\ref{x}) gives
\begin{align*}
\mathbb{E}[\mathcal{C}]=&\sum_{k=1}^N\sum_{i=1}^d\int_{t_{k-1}}^{t_k}\mathbb{E}\left[\mathbb{E}\left[\partial_{x_i}\varphi(t,u(t,Z(k)))\sigma_i(t)\frac{Z_i(k)}{h}u_i(t,Z(k))\Big|\mathcal{F}_{t_{k-1}}\right]\right]dt\\
=&\sum_{k=1}^N\sum_{i=1}^d\int_{t_{k-1}}^{t_k}\mathbb{E}\left[\sum_{j=1}^d\sigma_i(t)\partial_{x_j}\partial_{x_i}\varphi(t,u(t,Z(k)))\partial_{x_i}u_j(t,Z(k))u_i(t,Z(k))\right]dt\\
&+\sum_{k=1}^N\sum_{i=1}^d\int_{t_{k-1}}^{t_k}\mathbb{E}\left[\sigma_i(t)\partial_{x_i}\varphi(t,u(t,Z(k)))\partial_{x_i}u_i(t,Z(k))\right]dt.
\end{align*}
Note that last term above coincides with $\mathbb{E}[\mathcal{B}]$ which appear with a negative sign in (\ref{zz}); hence,
\begin{align*}
&-\mathbb{E}[\mathcal{B}]+\mathbb{E}[\mathcal{C}]\\
&\quad=\sum_{k=1}^N\sum_{i,j=1}^d\int_{t_{k-1}}^{t_k}\mathbb{E}\left[\sigma_i(t)\partial_{x_j}\partial_{x_i}\varphi(t,u(t,Z(k)))\partial_{x_i}u_j(t,Z(k))u_i(t,Z(k))\right]dt.
\end{align*}
Before recollecting all the parts of our computation, we make a further step; if we denote by $\mathcal{G}^{(k)}_{i,t}$ the sigma algebra generated by the random variable $u_i(t,Z(k))$, for $t\in [t_{k-1},t_{k}]$, $k=1,...,N$ and $i=1,...,d$, we can rewrite the expectation inside the integral above as
\begin{align*}
&\mathbb{E}\left[\sigma_i(t)\partial_{x_j}\partial_{x_i}\varphi(t,u(t,Z(k)))\partial_{x_i}u_j(t,Z(k))u_i(t,Z(k))\right]\\
&\quad=\mathbb{E}\left[\mathbb{E}\left[\sigma_i(t)\partial_{x_j}\partial_{x_i}\varphi(t,u(t,Z(k)))\partial_{x_i}u_j(t,Z(k))u_i(t,Z(k))\Big|\mathcal{G}^{(k)}_{i,t}\right]\right]\\
&\quad=\mathbb{E}\left[\sigma_i(t)\partial_{x_j}\partial_{x_i}\varphi(t,u(t,Z(k)))u_i(t,Z(k))\mathbb{E}\left[\partial_{x_i}u_j(t,Z(k))\Big|\mathcal{G}^{(k)}_{i,t}\right]\right]\\
&\quad=\mathbb{E}\left[\sigma_i(t)\partial_{x_j}\partial_{x_i}\varphi(t,u(t,Z(k)))u_i(t,Z(k))g^{(k)}_{ij}(t,u_i(t,Z(k)))\right],
\end{align*}  
where $g^{(k)}_{ij}:[t_{k-1},t_{k}]\times\mathbb{R}\to\mathbb{R}$ is a measurable function, whose existence is guaranteed by Doob's Lemma, chosen to satisfy
\begin{align}\label{g1}
g^{(k)}_{ij}(t,u_i(t,Z(k)))=\mathbb{E}\left[\partial_{x_i}u_j(t,Z(k))\Big|\mathcal{G}^{(k)}_{i,t}\right].
\end{align}
Now, starting from (\ref{zz}) and using the last two identities we obtain
\begin{align*}
0=&\mathbb{E}[\mathcal{A}]-\mathbb{E}[\mathcal{B}]+\mathbb{E}[\mathcal{C}]+\mathbb{E}[\mathcal{D}]\\
=&\sum_{k=1}^N\int_{t_{k-1}}^{t_k}\mathbb{E}[\partial_t\varphi(t,u(t,Z(k)))]dt\\
&+\sum_{k=1}^N\sum_{i,j=1}^d\int_{t_{k-1}}^{t_k}\mathbb{E}\left[\sigma_i(t)\partial_{x_j}\partial_{x_i}\varphi(t,u(t,Z(k)))u_i(t,Z(k))g^{(k)}_{ij}(t,u_i(t,Z(k)))\right]dt\\
&+\sum_{k=1}^N\sum_{i=1}^d\int_{t_{k-1}}^{t_k}\mathbb{E}[\partial_{x_i}\varphi(t,u(t,Z(k)))b_i(t,u(t,Z(k)))]dt\\
=&\int_0^T\mathbb{E}[\partial_t\varphi(t,X^{\pi}(t))]dt\\
&+\sum_{k=1}^N\sum_{i,j=1}^d\int_{t_{k-1}}^{t_k}\mathbb{E}\left[\sigma_i(t)\partial_{x_j}\partial_{x_i}\varphi(t,X^{\pi}(t))X_i^{\pi}(t)g^{(k)}_{ij}(t,X_i^{\pi}(t))\right]dt\\
&+\int_0^T\sum_{i=1}^d\mathbb{E}[\partial_{x_i}\varphi(t,X^{\pi}(t))b_i(t,X^{\pi}(t))]dt\\
=&\int_0^T\mathbb{E}[\partial_t\varphi(t,X^{\pi}(t))]dt\\
&+\sum_{i,j=1}^d\int_0^T\mathbb{E}\left[\sigma_i(t)\partial_{x_j}\partial_{x_i}\varphi(t,X^{\pi}(t))X_i^{\pi}(t)g_{ij}(t,X_i^{\pi}(t))\right]dt\\
&+\int_0^T\sum_{i=1}^d\mathbb{E}[\partial_{x_i}\varphi(t,X^{\pi}(t))b_i(t,X^{\pi}(t))]dt,
\end{align*}
where $g_{ij}:[0,T]\times\mathbb{R}\to\mathbb{R}$ is defined by
\begin{align}\label{g2}
g_{ij}(t,y):=g^{(k)}_{ij}(t,y),\quad\mbox{ if } t\in [t_{k-1},t_k].
\end{align}
Observe that the last member above contains expectations of functions of the random vector $X^{\pi}(t)$, for $t\in [0,T]$; therefore, writing the law of this random vector as 
\begin{align*}
\mu^{\pi}(t,A):=\mathbb{P}(X^{\pi}(t)\in A),\quad A\in\mathcal{B}(\mathbb{R}^d),
\end{align*} 
we can write 
\begin{align*}
0=&\int_0^T\int_{\mathbb{R}^d}\partial_t\varphi(t,x)d\mu^{\pi}(t,x)dt+\sum_{i,j=1}^d\int_0^T\int_{\mathbb{R}^d}\sigma_i(t)\partial_{x_j}\partial_{x_i}\varphi(t,x)x_ig_{ij}(t,x_i)d\mu^{\pi}(t,x)dt\\
&+\sum_{i=1}^d\int_0^T\int_{\mathbb{R}^d}\partial_{x_i}\varphi(t,x)b_i(t,x)d\mu^{\pi}(t,x)dt\\
=&\int_0^T\int_{\mathbb{R}^d}\left[\partial_t\varphi(t,x)+\sum_{i,j=1}^d\sigma_i(t)\partial^2_{x_ix_j}\varphi(t,x)x_ig_{ij}(t,x_i)+\langle b(t,x),\nabla\varphi(t,x)\rangle \right]d\mu^{\pi}(t,x)dt.
\end{align*}
The last equalities hold for any test function $\varphi\in C^2_0([0,T]\times\mathbb R^d)$ and this completes the proof of Theorem \ref{main theorem 2}. 

\section{Proof of Theorem \ref{main theorem 3}}\label{main theorem 3 proof}

The aim of this section is to prove that the mild solution of
\begin{align}\label{WZ bis}
\begin{cases}
\frac{dX_i^{\pi}(t)}{dt}=b_i(t,X^{\pi}(t))+\sigma_i(t)X^{\pi}_i(t)\diamond \frac{dB_i^{\pi}(t)}{dt},\\
\quad\quad\mbox{for }t\in]0,T]\mbox{ and } i=1,...,d;\\
X^{\pi}_i(0)=c_i\in\mathbb{R},\quad\mbox{for } i=1,...,d,
\end{cases}
\end{align}
as defined in (\ref{def X_t}), converges in $\mathbb{L}^1(\Omega)$ to the unique strong solution of the It\^o SDE
\begin{align}\label{ItoSDE bis}
\begin{cases}
dX_i(t)=b_i(t,X(t))dt+\sigma_i(t)X_i(t)dB_i(t),\\
\quad\quad\mbox{for }t\in]0,T]\mbox{ and } i=1,...,d;\\
X_i(0)=c_i\in\mathbb{R},\quad\mbox{for } i=1,...,d.
\end{cases}
\end{align}
First of all, by means of the It\^o formula we rewrite equation (\ref{ItoSDE bis}) in a form that resembles identity (\ref{solution}). In fact, setting
\begin{align*}
\mathtt{E}_i(s,t):=e^{-\int_s^t\sigma_i(r)dB_i(r)-\frac{1}{2}\int_s^t\sigma_i(r)^2dr},\quad 0\leq s\leq t\leq T,
\end{align*}
and
\begin{align*}
\mathcal{E}_i(s,t):=e^{\int_s^t\sigma_i(r)dB_i(r)-\frac{1}{2}\int_s^t\sigma_i(r)^2dr},\quad 0\leq s\leq t\leq T,
\end{align*}
we write
\begin{align*}
d\left(X_i(t)\diamond \mathtt{E}_i(0,t)\right)=&d(\mathtt{T}_{\sigma_i}X_i(t)\cdot\mathtt{E}_i(0,t))\\
=&\mathtt{E}_i(0,t)\cdot d\mathtt{T}_{\sigma_i}X_i(t)+\mathtt{T}_{\sigma_i}X_i(t)\cdot d\mathtt{E}_i(0,t)\\
&+d\mathtt{T}_{\sigma_i}X_i(t)\cdot d\mathtt{E}_i(0,t).
\end{align*}
Now,
\begin{align*}
d\mathtt{T}_{\sigma_i}X_i(t)&=[b_i(t,\mathtt{T}_{\sigma_i}X(t))+\sigma_i(t)^2\mathtt{T}_{\sigma_i}X_i(t)]dt+\sigma_i(t)\mathtt{T}_{\sigma_i}X_i(t)dB_i(t),\\
d\mathtt{E}_i(0,t)&=-\sigma_i(t)\mathtt{E}_i(0,t)dB_i(t),
\end{align*}
and hence
\begin{align*}
&d\left(X_i(t)\diamond \mathtt{E}_i(0,t)\right)\\
&\quad=[b_i(t,\mathtt{T}_{\sigma_i}X(t))\mathtt{E}_i(0,t)+\sigma_i(t)^2\mathtt{T}_{\sigma_i}X_i(t)\mathtt{E}_i(0,t)]dt+\sigma_i(t)\mathtt{T}_{\sigma_i}X_i(t)\mathtt{E}_i(0,t)dB_i(t)\\
&\quad\quad-\sigma_i(t)\mathtt{T}_{\sigma_i}X_i(t)\mathtt{E}_i(0,t)dB_i(t)-\sigma_i(t)^2\mathtt{T}_{\sigma_i}X_i(t)\mathtt{E}_i(0,t)dt\\
&=b_i(t,\mathtt{T}_{\sigma_i}X(t))\mathtt{E}_i(0,t)dt\\
&=b_i(t,X(t))\diamond\mathtt{E}_i(0,t)dt.
\end{align*}
This is equivalent to
\begin{align*}
X_i(t)\diamond \mathtt{E}_i(0,t)=c_i+\int_0^tb_i(s,X(s))\diamond\mathtt{E}_i(0,s)ds,
\end{align*}
or
\begin{align*}
X_i(t)&=c_i\mathcal{E}_i(0,t)+\int_0^tb_i(s,X(s))\diamond\mathtt{E}_i(0,s)\diamond\mathcal{E}_i(0,t)ds\\
&=c_i\mathcal{E}_i(0,t)+\int_0^tb_i(s,X(s))\diamond\mathcal{E}_i(s,t)ds.
\end{align*}
Here, we utilized the equality
\begin{align*}
\mathtt{E}_i(0,t)\diamond\mathcal{E}_i(0,t)=1,\quad\mbox{ for all $t\in [0,T]$}.
\end{align*}
Therefore, the solution of the It\^o SDE (\ref{ItoSDE bis}) verifies the integral identity
\begin{align}\label{mild Ito}
X_i(t)=c_i\mathcal{E}_i(0,t)+\int_0^tb_i(s,X(s))\diamond\mathcal{E}_i(s,t)ds,
\end{align}
for all $t\in [0,T]$ and $i=1,...,d$. We are now ready to prove the convergence:
\begin{align*}
&|X_i^{\pi}(t)-X_i(t)|&\\
&\quad=\left|c_i\left(\mathcal E_i^{\pi}(t,0)-\mathcal E_i(0,t)\right)+\int_{0}^{t}b_i(s,X^{\pi}(s))\diamond \mathcal E_i^{\pi}(s,t)-b_i\left(s,X(s)\right)\diamond \mathcal E_i(s,t)ds\right|\\
&\quad\leq|c_i|\left|\mathcal E_i^{\pi}(0,t)-\mathcal E_i(0,t)\right|+\int_{0}^{t}\left|b_i(s,X^{\pi}(s))\diamond \mathcal E_i^{\pi}(s,t)-b_i\left(s,X(s)\right)\diamond \mathcal E_i(s,t)\right|ds\\
&\quad\leq|c_i|\left|\mathcal E_i^{\pi}(0,t)-\mathcal E_i(0,t)\right|+\int_{0}^{t}\left|b_i(s,X^{\pi}(s))\diamond \mathcal E_i^{\pi}(s,t)-b_i(s,X(s))\diamond\mathcal E_i^{\pi}(s,t)\right|ds\\
&\quad\quad+\int_0^t\left|b_i(s,X(s))\diamond\mathcal E_i^{\pi}(s,t)-b_i\left(s,X(s)\right)\diamond \mathcal E_i(s,t)\right|ds\\
&\quad\leq|c_i|\left|\mathcal E_i^{\pi}(0,t)-\mathcal E_i(0,t)\right|+\int_{0}^{t}\left|b_i(s,X^{\pi}(s))-b_i(s,X(s))\right|\diamond\mathcal E_i^{\pi}(s,t)ds\\
&\quad\quad+\int_0^t\left|b_i(s,X(s))\diamond\mathcal E_i^{\pi}(s,t)-b_i\left(s,X(s)\right)\diamond \mathcal E_i(s,t)\right|ds\\
&\quad\leq|c_i|\left|\mathcal E_i^{\pi}(0,t)-\mathcal E_i(0,t)\right|+L\int_{0}^{t}\sum_{j=1}^d\left|X_j^{\pi}(s)-X_j(s)\right|\diamond\mathcal E_i^{\pi}(s,t)ds\\
&\quad\quad+\int_0^t\left|b_i(s,X(s))\diamond\left(\mathcal E_i^{\pi}(s,t)-\mathcal E_i(s,t)\right)\right|ds;
\end{align*}
In the last two estimates we utilized inequality (\ref{wick absolute value}) together with the Lipschitz continuity of $b$, which is implied by Assumption \ref{assumptions}. We now take the expectation of the first and last members above to get
\begin{align*}
&\mathbb{E}[|X_i^{\pi}(t)-X_i(t)|]\\
&\quad\leq |c_i|\mathbb{E}\left[\left|\mathcal E_i^{\pi}(0,t)-\mathcal E_i(0,t)\right|\right]+L\int_{0}^{t}\sum_{j=1}^d\mathbb{E}\left[\left|X_j^{\pi}(s)-X_j(s)\right|\right]ds\\
&\quad\quad+\int_0^t\mathbb{E}\left[\left|b_i(s,X(s))\diamond\left(\mathcal E_i^{\pi}(s,t)-\mathcal E_i(s,t)\right)\right|\right]ds.
\end{align*}
The previous inequality is valid for all $i=1,...,d$ and $t\in [0,T]$; therefore, summing over $i$ and setting
\begin{align*}
\mathtt{X}^{\pi}(t):=\sum_{i=1}^d\mathbb{E}\left[\left|X_i^{\pi}(s)-X_i(s)\right|\right],
\end{align*}
we obtain
\begin{align*}
\mathtt{X}^{\pi}(t)\leq&\sum_{i=1}^d|c_i|\mathbb{E}\left[\left|\mathcal E_i^{\pi}(0,t)-\mathcal E_i(0,t)\right|\right]+Ld\int_{0}^{t}\mathtt{X}^{\pi}(s)ds\\
&+\sum_{i=1}^d\int_0^t\mathbb{E}\left[\left|b_i(s,X(s))\diamond\left(\mathcal E_i^{\pi}(s,t)-\mathcal E_i(s,t)\right)\right|\right]ds\\
=&\mathcal{M}^{\pi}(t)+Ld\int_{0}^{t}\mathtt{X}^{\pi}(s)ds,
\end{align*}
with
\begin{align*}
\mathcal{M}^{\pi}(t):=&\sum_{i=1}^d|c_i|\mathbb{E}\left[\left|\mathcal E_i^{\pi}(0,t)-\mathcal E_i(0,t)\right|\right]\\
&+\sum_{i=1}^d\int_0^t\mathbb{E}\left[\left|b_i(s,X(s))\diamond\left(\mathcal E_i^{\pi}(s,t)-\mathcal E_i(s,t)\right)\right|\right]ds.
\end{align*}
According to Gronwall's inequality the previous estimate yields
\begin{align}\label{gronwall}
\mathtt{X}^{\pi}(t)\leq\mathcal{M}^{\pi}(t)+L d \int_0^t \mathcal{M}^{\pi}(s)e^{L d (t-s)}ds;
\end{align}
the proof will be complete if we show that $\mathcal{M}^{\pi}(t)$ is bounded for all $t\in [0,T]$ and any finite partition $\pi$ and that
\begin{align*}
\lim_{\Vert\pi\Vert\to 0}\mathcal{M}^{\pi}(t)=0,\quad\mbox{ for all $t\in [0,T]$};
\end{align*}
this will allow us to use dominated convergence for the Lebesgue integral appearing in (\ref{gronwall}) and conclude that
\begin{align*}
\lim_{\Vert\pi\Vert\to 0}\mathtt{X}^{\pi}(t)=\lim_{\Vert\pi\Vert\to 0}\sum_{i=1}^d\mathbb{E}\left[\left|X_i^{\pi}(s)-X_i(s)\right|\right]=0.
\end{align*}
We start with the boundedness:
\begin{align*}
\mathcal{M}^{\pi}(t)\leq&\sum_{i=1}^d|c_i|\left(\mathbb{E}\left[\left|\mathcal E_i^{\pi}(0,t)\right|\right]+\mathbb{E}\left[\left|\mathcal E_i(0,t)\right|\right]\right)\\
&+\sum_{i=1}^d\int_0^t\mathbb{E}\left[\left|b_i(s,X(s))\diamond\mathcal E_i^{\pi}(s,t)-b_i(s,X(s))\diamond\mathcal E_i(s,t)\right|\right]ds\\
\leq& 2\sum_{i=1}^d|c_i|+\sum_{i=1}^d\int_0^t\mathbb{E}\left[\left|b_i(s,X(s))\diamond\mathcal E_i^{\pi}(s,t)\right|\right]+\mathbb{E}\left[\left|b_i(s,X(s))\diamond\mathcal E_i(s,t)\right|\right]ds\\
\leq& 2\sum_{i=1}^d|c_i|+\sum_{i=1}^d\int_0^t\mathbb{E}\left[|b_i(s,X(s))|\diamond\mathcal E_i^{\pi}(s,t)\right]+\mathbb{E}\left[|b_i(s,X(s))|\diamond\mathcal E_i(s,t)\right]ds\\
\leq& 2\sum_{i=1}^d|c_i|+2dMt.
\end{align*}	
We now check the convergence:
\begin{align*}
\lim_{\Vert\pi\Vert\to 0}\mathcal{M}^{\pi}(t)=&\lim_{\Vert\pi\Vert\to 0}\sum_{i=1}^d|c_i|\mathbb{E}\left[\left|\mathcal E_i^{\pi}(0,t)-\mathcal E_i(0,t)\right|\right]\\
&+\lim_{\Vert\pi\Vert\to 0}\sum_{i=1}^d\int_0^t\mathbb{E}\left[\left|b_i(s,X(s))\diamond\left(\mathcal E_i^{\pi}(s,t)-\mathcal E_i(s,t)\right)\right|\right]ds\\
=&\sum_{i=1}^d\lim_{\Vert\pi\Vert\to 0}\int_0^t\mathbb{E}\left[\left|b_i(s,X(s))\diamond\left(\mathcal E_i^{\pi}(s,t)-\mathcal E_i(s,t)\right)\right|\right]ds.
\end{align*}
We now prove that we can take the last limit inside the integral; first of all, note that the integrand is bounded: in fact,
\begin{align*}
\mathbb{E}\left[\left|b_i(s,X(s))\diamond\left(\mathcal E_i^{\pi}(s,t)-\mathcal E_i(s,t)\right)\right|\right]=&\mathbb{E}\left[\left|b_i(s,X(s))\diamond\mathcal E_i^{\pi}(s,t)-b_i(s,X(s))\diamond\mathcal E_i(s,t)\right|\right]\\
\leq&\mathbb{E}\left[\left|b_i(s,X(s))\diamond\mathcal E_i^{\pi}(s,t)\right|\right]+\mathbb{E}\left[\left|b_i(s,X(s))\diamond\mathcal E_i(s,t)\right|\right]\\
\leq&\mathbb{E}\left[|b_i(s,X(s))|\diamond\mathcal E_i^{\pi}(s,t)\right]+\mathbb{E}\left[|b_i(s,X(s))|\diamond\mathcal E_i(s,t)\right]\\
=&\mathbb{E}\left[|b_i(s,X(s))|\right]+\mathbb{E}\left[|b_i(s,X(s))|\right]\\
\leq&2M.
\end{align*}
We proceed by proving that 
\begin{align*}
\lim_{\Vert\pi\Vert\to 0}\mathbb{E}\left[\left|b_i(s,X(s))\diamond\left(\mathcal E_i^{\pi}(s,t)-\mathcal E_i(s,t)\right)\right|\right]=0.
\end{align*} 
Let us rewrite the expected value as follows:
\begin{align*}
&\mathbb{E}\left[\left|b_i(s,X(s))\diamond\left(\mathcal E_i^{\pi}(s,t)-\mathcal E_i(s,t)\right)\right|\right]\\
&\quad=\mathbb{E}\left[\left|b_i(s,X(s))\diamond\mathcal E_i^{\pi}(s,t)-b_i(s,X(s))\diamond\mathcal E_i(s,t)\right|\right]\\
&\quad=\mathbb{E}\left[\left|\mathtt{T}_{\sigma_i,\pi}b_i(s,X(s))\mathcal E_i^{\pi}(s,t)-\mathtt{T}_{\sigma_i}b_i(s,X(s))\mathcal E_i(s,t)\right|\right]\\
&\quad\leq\mathbb{E}\left[\left|\mathtt{T}_{\sigma_i,\pi}b_i(s,X(s))\mathcal E_i^{\pi}(s,t)-\mathtt{T}_{\sigma_i}b_i(s,X(s))\mathcal E_i^{\pi}(s,t)\right|\right]\\
&\quad\quad+\mathbb{E}\left[\left|\mathtt{T}_{\sigma_i}b_i(s,X(s))\mathcal E_i^{\pi}(s,t)
-\mathtt{T}_{\sigma_i}b_i(s,X(s))\mathcal E_i(s,t)\right|\right]\\
&\quad=\mathbb{E}\left[|b_i(s,\mathtt{T}_{\sigma_i,\pi}X(s))-b_i(s,\mathtt{T}_{\sigma_i}X(s))|\mathcal E_i^{\pi}(s,t)\right]\\
&\quad\quad+\mathbb{E}\left[|b_i(s,\mathtt{T}_{\sigma_i}X(s))||\mathcal E_i^{\pi}(s,t)
-\mathcal E_i(s,t)|\right]\\
&\quad\leq L\mathbb{E}\left[|\mathtt{T}_{\sigma_i,\pi}X(s)-\mathtt{T}_{\sigma_i}X(s)|\mathcal E_i^{\pi}(s,t)\right]\\
&\quad\quad+M\mathbb{E}\left[|\mathcal E_i^{\pi}(s,t)
-\mathcal E_i(s,t)|\right].
\end{align*} 
Hence,
\begin{align*}
&\lim_{\Vert\pi\Vert\to 0}\mathbb{E}\left[\left|b_i(s,X(s))\diamond\left(\mathcal E_i^{\pi}(s,t)-\mathcal E_i(s,t)\right)\right|\right]\\
&\quad\leq\lim_{\Vert\pi\Vert\to 0}L\mathbb{E}\left[|\mathtt{T}_{\sigma_i,\pi}X(s)-\mathtt{T}_{\sigma_i}X(s)|\mathcal E_i^{\pi}(s,t)\right]\\
&\quad\quad+\lim_{\Vert\pi\Vert\to 0}M\mathbb{E}\left[|\mathcal E_i^{\pi}(s,t)
-\mathcal E_i(s,t)|\right].
\end{align*} 
By the properties of the translation operator,
\begin{align*}
\lim_{\Vert\pi\Vert\to 0}\mathtt{T}_{\sigma_i,\pi}X(s)=\mathtt{T}_{\sigma_i}X(s),\quad\mbox{ in $\mathbb{L}^p(\Omega)$ for all $p\geq 1$};
\end{align*}
on the other hand
\begin{align*}
\lim_{\Vert\pi\Vert\to 0}\mathcal E_i^{\pi}(s,t)=\mathcal E_i(s,t),\quad\mbox{ in $\mathbb{L}^p(\Omega)$ for all $p\geq 1$}.
\end{align*}
These two facts imply
\begin{align*}
\lim_{\Vert\pi\Vert\to 0}L\mathbb{E}\left[|\mathtt{T}_{\sigma_i,\pi}X(s)-\mathtt{T}_{\sigma_i}X(s)|\mathcal E_i^{\pi}(s,t)\right]=0,
\end{align*}
completing the proof.

\end{document}